\newtheorem{theorem}{Theorem}[section]
\newtheorem{Corol}[theorem]{Corollary}
\newtheorem{Lemma}[theorem]{Lemma}
\newtheorem{Prop}[theorem]{Proposition}
\theoremstyle{definition}
\newtheorem{Remark}[theorem]{Remark}
\newtheorem{Example}[theorem]{Example}
\numberwithin{equation}{section}
\newcommand{\tr}{\mathrm{tr} }
\begin{document}


\baselineskip=17pt



\title{Space-like Surfaces in  Minkowski Space $\mathbb E^4_1$  with Pointwise 1-Type Gauss Map
}
\author{U\u gur Dursun\footnote
{Adress: Istanbul Technical University, Faculty of Science and Letters, Department of Mathematics
34469, Maslak, Istanbul, TURKEY
} \footnote{e-mail: udursun@itu.edu.tr} and Nurettin Cenk Turgay \footnotemark[1] \footnote{e-mail: turgayn@itu.edu.tr}}

\date{}

\maketitle
\begin{abstract}
In this work we firstly classify space-like surfaces in  Minkowski space $\mathbb E^4_1$, 
de-Sitter space $\mathbb S^3_1$ and hyperbolic space $\mathbb H^3$
with harmonic Gauss map. 
Then we give a characterization and classification  of space-like surfaces with 
pointwise 1-type Gauss map of the first kind. We also give some explicit examples.

\textbf{Keywords}: {Pointwise 1-type Gauss map,     harmonic Gauss map,    marginally trapped surface,     
parallel mean curvature vector,    flat surface}

\textbf{Mathematics Subject Classification}: 53B25,   53C50
\end{abstract}

\maketitle

\section{Introduction}

In late 1970's B. Y. Chen introduced the notion of finite type submanifolds of Euclidean space
 \cite{CH1}. Since then many works have been done to characterize or classify submanifolds of 
 Euclidean space or pseudo-Euclidean space in terms of finite type.
Also,  B. Y. Chen and P. Piccinni extended the notion 
of finite type to differentiable maps, in particular, to Gauss map of submanifolds in \cite{Chen-Piccinni}. 
A smooth map $\phi$ on  a submanifold $M$ of a Euclidean space or 
a pseudo-Euclidean space is said to be of {\it finite type} if  $\phi$ can be expressed  as a finite  
sum of eigenfunctions 
of  the Laplacian $\Delta$ of $M$, that is, 
$\phi = \phi_0 + \sum_{i=1}^k \phi_i$, where $\phi_0$ is a constant map, $\phi_1, \dots, \phi_k$ 
non-constant maps such that $\Delta \phi_i = \lambda_i \phi_i, \; \lambda_i \in \mathbb R$, $i=1, \dots, k$.

If a submanifold $M$  of a Euclidean space or a pseudo-Euclidean space  has  1-type Gauss map
$\nu$, then  $\nu$ satisfies $ \Delta \nu  = \lambda (\nu+C)$ for some $\lambda \in \mathbb
R$ and  some constant vector $C$.  In \cite{Chen-Piccinni}, B. Y. Chen and P. Piccinni studied compact submanifolds 
of Euclidean spaces with finite type Gauss map. Several articles also appeared  on   submanifolds with finite type Gauss map 
 (cf. \cite{Baik,Baik-Blair,Baik-Chen-Verst,Baik-Verst,Yoon,Yoon-2}).

However, the Laplacian of  the Gauss map of several surfaces and hypersurfaces such as  
helicoids  of  the  1st, 2nd, and 3rd  kind, conjugate  Enneper's  surface of the second kind and 
 B-scrolls  in  a 3-dimensional Minkowski space $\mathbb E^3_1$,  
 generalized  catenoids, spherical n-cones, hyperbolical n-cones and 
 Enneper's hypersurfaces in $\mathbb E^{n+1}_1$   take the form
\begin{equation}\label{PW1TypeDefinition}
 \Delta \nu =f(\nu +C)
\end{equation}
for some smooth function $f$ on $M$ and some constant vector
$C$ (\cite{UDur2,Kim-Yoon}). A submanifold   of a pseudo-Euclidean space is said to have
{\it pointwise 1-type Gauss map} if its  Gauss map satisfies
\eqref{PW1TypeDefinition} for some smooth  function $f$ on $M$  and some
constant vector $C$.   In particular, if $C$ is zero, it is 
said to be of {\it the first kind}.
Otherwise, it  is said to be of {\it the second kind} 
(cf. \cite{Arslan-and,CCK,Choi-Kim,UDur,UDur3,KKKM,Kim-Yoon-2}).

\begin{Remark} \label{RemarkDefinition} 
The Gauss map $\nu$ of  a totally geodesic submanifold $M$ in $\mathbb E^m_1$ is a constant vector and 
$\Delta \nu = 0$, i.e., it is harmonic. For $f=0$ if we write $\Delta \nu  = 0 \cdot \nu$, 
then  $M$ has pointwise 1-type Gauss map of the first kind. 
If we choose    $C=-\nu$, then \eqref{PW1TypeDefinition} holds for any non-zero  smooth function $f$. 
In this case  $M$ has pointwise 1-type Gauss map of the second kind.
Therefore, a totally geodesic submanifold  in $\mathbb E^m_1$ is a trivial submanifold with 
pointwise 1-type Gauss map of both  the first kind and the second kind.  
\end{Remark}

The complete classification of 
ruled surfaces in $\mathbb E^3_1$ with pointwise  1-type Gauss map of 
the first kind was obtained in \cite{Kim-Yoon}. 
Also, a complete classification of rational surfaces of revolution 
in $\mathbb E^3_1$ satisfying \eqref{PW1TypeDefinition} was recently given 
in \cite{KKKM}, and it was proved that  a right circular cone 
and a  hyperbolic cone in $\mathbb E^3_1$ are the only rational surfaces of 
revolution  in $\mathbb E^3_1$ with pointwise  1-type Gauss map of the second kind.  
The first   author studied  rotational hypersurfaces in Lorentz-Minkowski space  
with pointwise  1-type Gauss map  \cite{UDur2},  
Moreover, in \cite{Kim-Yoon-3} a complete classification of cylinderical and 
non-cylinderical surfaces in $\mathbb E^m_1$ with pointwise 1-type Gauss map 
of the first kind was obtained.

In this article, we study space-like surfaces in $\mathbb E^4_1$ 
with pointwise  1-type Gauss map of the first kind. 
Surfaces with harmonic Gauss map  in $\mathbb E^4_1$ are of global 1-type Gauss 
map of the first kind. 
We first give a characterization and classification of maximal surfaces and 
non-maximal space-like surfaces in $\mathbb E^4_1$ with harmonic Gauss map.
We also prove that  oriented maximal surfaces and surfaces with light-like mean curvature vector
in $\mathbb E^4_1$  with harmonic 
Gauss map  are the only surfaces  in $\mathbb E^4_1$  with (global) 1-type 
Gauss map of the first kind. 

Then, we obtain  the necessary and sufficient conditions on non-maximal space-like surfaces 
in $\mathbb E^4_1$  with pointwise 1-type Gauss map of the first kind and 
we give a classification of such surfaces.
Further, we prove that an oriented non-maximal space-like surface in $\mathbb E^4_1$ 
has (global) 1-type Gauss map of  the first kind if and only if the surface 
has constant Gaussian curvature and parallel mean curvature. 

\section{Prelimineries}

Let $\mathbb E^m_t$ denote the pseudo-Euclidean $m$-space with the canonical 
pseudo-Euclidean metric tensor of index $t$ given by  
$$
 g=-\sum\limits_{i=1}^t dx_i^2+\sum\limits_{j=t+1}^m dx_j^2,
$$
where $(x_1, x_2, \hdots, x_m)$  is a rectangular coordinate system in $\mathbb E^m_t$.
We put 
\begin{eqnarray} 
\mathbb S^{m-1}_t(r^2)&=&\{x\in\mathbb E^m_t: \langle x, x \rangle=r^{-2}\},\notag
\\  
\mathbb H^{m-1}_{t-1}(-r^2)&=&\{x\in\mathbb E^m_t: \langle x, x \rangle=-r^{-2}\},\notag
\end{eqnarray}
where $\langle\ ,\ \rangle$ is the indefinite inner product of $\mathbb E^m_t$.
Then $\mathbb S^{m-1}_t (r^2)$ and\linebreak   $\mathbb H^{m-1}_{t-1}(-r^2)$, $m\geq 3$, are complete 
pseudo-Riemannian manifolds of constant curvature $r^2$ and $-r^{2}$, respectively.
The Lorentzian manifolds $\mathbb E^m_1$  and $\mathbb S^{m-1}_1 (r^2)$ 
are known as the Minkowski and de Sitter spaces, respectively.
For  $t=1$
$$
\mathbb H^{m-1}(-r^2)  =   \{x=(x_1, \dots,  x_m)
\in\mathbb E^m_1: \langle x, x\rangle=-r^{-2}\mbox{\ and \ } x_1>0\}
$$
is the hyperbolic space in $\mathbb E^m_1$.

The light cone  $\mathbf{\mathcal{LC}}^{n-1}$ with vertex at the origin in $\mathbb E^m_t$ is defined to be   
$$
\mathcal{LC}^{n-1}  =  \{x\in\mathbb E^m_t: \langle x,x\rangle=0\}.
$$

A vector $v$ in $\mathbb E^m_t$  is called space-like 
(resp., time-like) if $\langle v, v \rangle>0$  
(resp., $\langle  v, v \rangle<0$).  A vector $v$ is called light-like if it is nonzero and  it satisfies 
  $\langle v, v \rangle=0$.

Let $M$ be an $n$-dimensional pseudo-Riemannian submanifold  of the 
pseu\-do-Euclidean space $\mathbb E^m_t$. 
We denote Levi-Civita connections of $\mathbb E^m_t$ and $M$ by $\widetilde{\nabla}$ and $\nabla$,  
respectively. In this section, we shall use letters 
$X,\; Y,\;Z,\; W$ (resp., $\xi,\; \eta$) to denote vectors fields 
tangent (resp., normal) to $M$. The Gauss and Weingarten formulas are given, respectively, by
\begin{eqnarray}
\label{MEtomGauss} \widetilde\nabla_X Y&=& \nabla_X Y + h(X,Y),\\
\label{MEtomWeingarten} \widetilde\nabla_X \xi&=& -A_\xi(X)+D_X \xi,
\end{eqnarray}
 where $h$,  $D$  and  $A$ are the second fundamental form, the normal 
 connection and  the shape operator of $M$, respectively.
 
 For  each $\xi \in T^{\bot}_p M$, the shape operator $A_{\xi}$ is a symmetric 
 endomorphism of the tangent space  $T_p M$ at $p \in M$. 
 The shape operator and the second fundamental form are related by 
 $ \left\langle h(X, Y), \xi \right\rangle = \left\langle A_{\xi}X, Y \right\rangle$.
 
The Gauss, Codazzi and Ricci equations are given, respectively, by
\begin{eqnarray}
\label{MinkGaussEquation} \langle R(X,Y,)Z,W\rangle&=&\langle h(Y,Z),h(X,W)\rangle-
\langle h(X,Z),h(Y,W)\rangle,\\
\label{MinkCodazzi} (\bar \nabla_X h )(Y,Z)&=&(\bar \nabla_Y h )(X,Z),\\
\label{MinkRicciEquation} \langle R^D(X,Y)\xi,\eta\rangle&=&\langle[A_\xi,A_\eta]X,Y\rangle,
\end{eqnarray}
where  $R,\; R^D$ are the curvature tensors associated with connections $\nabla$ 
and $D$, respectively, and  $\bar \nabla h$ is defined by
$$(\bar \nabla_X h)(Y,Z)=D_X h(Y,Z)-h(\nabla_X Y,Z)-h(Y,\nabla_X Z).$$ 
A submanifold  $M$ is said to have flat normal bundle if $R^D=0$  identically, 
and the second fundamental form $h$ of $M$  in $\mathbb E^m_t$ is called  parallel if $\bar \nabla h=0$.
A submanifold with parallel second fundamental form is also known as  a parallel submanifold.

Let $\{e_1, e_2, \hdots, e_m\}$ be a local orthonormal frame on $M$ with 
$\varepsilon_A= \langle e_A, e_A\rangle$ $=\pm 1$ such that 
$e_1, e_2, \hdots, e_n$ are tangent to $M$ and $e_{n+1}, e_{n+2}, \hdots, e_m$ are normal to $M$. 
We use  the following convention on the range of indices:  
$
1\leq A,B,C,\ldots \leq m,\quad 1\leq i,j,k,\ldots \leq n, \quad 
n+1\leq \beta,\gamma, \ldots \leq m.
$

Let  $\{\omega_{AB}\}$ with $\omega_{AB} + \omega_{BA} =0$ be the
connection 1-forms associated to $\{e_1,\dots,e_{m} \}$. Then we have 
$$
\widetilde{\nabla}_{e_k}e_i= \sum_{j=1}^{n} \varepsilon_j\omega_{ij}(e_k) e_j + 
\sum_{\beta =n+1}^{m} \varepsilon_{\beta}  h^{\beta}_{ik}   e_{\beta} 
$$
and
$$
 \widetilde{\nabla}_{e_k}e_{\beta}= - \sum_{j=1}^{n} \varepsilon_j h^{\beta}_{kj} e_j+
  \sum_{\nu =n+1}^{m} \varepsilon_{\nu} \omega_{\beta\nu } (e_k) e_{\nu},
$$  
 where $h^{\beta}_{ij}$'s are the coefficients of the second fundamental form $h$.

The mean curvature vector $H$, the scalar curvature $S$ and  the squared length $\| h\|^2$ of the second 
fundamental form $h$ are defined by
\begin{equation} \label{MinkOrtEgTanim}
H=\frac 1n \sum\limits_{\beta=n+1}^m \varepsilon_\beta\mathrm{tr}A_\beta e_\beta,
\end{equation}
\begin{equation}
\label{Mink2esFormUzTanim} \| h \|^2 =
\sum\limits_{i,j, \beta}\varepsilon_i\varepsilon_j\varepsilon_\beta h^\beta_{ij} h^\beta_{ji},
\end{equation} 
\begin{equation} \label{Scaler-curvature}
S = n^2 \langle H, H \rangle - \| h \|^2,
\end{equation}
where $\mathrm{tr}A_\beta$ denotes the trace of shape operator $A_\beta$, i.e.,
$
\mathrm{tr}A_\beta =  \sum\limits_{i=1}^n \varepsilon_i h^\beta_{ii}.
$

 The mean curvature vector $H$ of a submanifold of $M$ in $\mathbb E^m_t$ is called parallel if $DH=0$ identically.

The gradient of a smooth function $f$ defined on $M$ into $\mathbb R$ is defined by
$
\nabla f  =  \sum\limits_{i=1}^n  \varepsilon_i e_i(f)e_i
$
and the Laplace operator acting on $M$ is $
\Delta =\sum\limits_{i=1}^n\varepsilon_i(\nabla_{e_i}e_i-e_ie_i).
$
If the position vector $x$ of $M$ in $E^m_s$ satisfies 
$\Delta x\neq0$ and $\Delta^2 x=0$, then $M$ is called biharmonic.

 A surface $M$ in $\mathbb E^4_1$ is called space-like if 
 every non-zero tangent vector on $M$ is space-like. Let $\{e_1,e_2,e_3,e_4\}$ 
 be a local  orthonormal frame  on a space-like surface $M$ such that $e_1, e_2$ are tangent to $M$ 
 and $e_3, e_4$ are normal to $M$ with $\varepsilon_\beta=\langle e_\beta,e_\beta\rangle,\ \beta=3,4$.

The Gaussian curvature $K$  is defined by $K=R(e_1,e_2;e_2,e_1)$. 
Note that scalar curvature $S$ and Gaussian curvature of $M$  satisfies $S=2K$. 
Thus, \eqref{Scaler-curvature} implies  
\begin{equation} \label{Gauss-curvature} 
 K =   2 \langle H, H \rangle - \| h \|^2 /2.
\end{equation}
From Gauss equation \eqref{MinkGaussEquation} we have  $K=\varepsilon_3(\det A_3-\det A_4)$. 
If $K$  vanishes identically, $M$ is said to be flat.  
On the other hand, $M$ is called maximal if $H=0$. A surface $M$ is called pseudo-umbilical if its second fundamental form $h$ and the mean curvature vector $H$ satisfies
$\langle h(X,Y),H\rangle=\rho\langle X,Y\rangle$
for a smooth function $\rho$. Moreover, if the equation 
$h(X,Y)=\langle X,Y\rangle H$
is satisfied, then $M$ is said to be totally umbilical.

If we put $h_{ij,k} = (\nabla_{e_k} h )(e_i, e_j)$, then for a space-like surface $M$ in $\mathbb E^4_1$ 
the Codazzi equation given by \eqref{MinkCodazzi} becomes
\begin{align} \label{MinkCodazziversion2}
\begin{split}
&h^\beta_{ij,k}= h^\beta_{jk,i},\quad i,j,k=1,2,\ \beta=3,4\\
&h^\beta_{jk,i} = e_i(h^\beta_{jk}) + 
\sum_{\gamma=3}^4\varepsilon_\gamma h^\gamma_{jk} \omega_{\gamma\beta} (e_i) -
\sum_{\ell=1}^2 \left ( \omega_{j\ell}(e_i) h^\beta_{\ell k} + 
\omega_{k\ell}(e_i) h^\beta_{j \ell} \right ).
\end{split}
\end{align}

Let $G(m-n, m)$ be the Grassmannian manifold consisting of 
all oriented $(m-n)$-planes through the origin of $\mathbb E^m_t$ 
and $\bigwedge^{m-n} \mathbb E^m_t$  the vector space obtained by 
the exterior product of  $m-n$ vectors in $\mathbb E^m_t$.
Let  $f_{i_1} \wedge \cdots \wedge f_{i_{m-n}}$ and $g_{i_1} 
\wedge \cdots \wedge g_{i_{m-n}}$ be two vectors in $\bigwedge^{m-n} \mathbb E^m_t$,
 where  $\{f_1, f_2, \dots, f_m\}$ and $\{g_1, g_2, \dots, g_m\}$ 
 are two orthonormal bases of $\mathbb E^m_t$.
 Define an indefinite inner product $\left\langle,  \right\rangle$ on 
 $\bigwedge^{m-n} \mathbb E^m_t$ by 
\begin{equation} \label{inner-prod}
\left\langle  f_{i_1} \wedge \cdots \wedge f_{i_{m-n}},  g_{i_1} \wedge \cdots \wedge g_{i_{m-n}} \right\rangle
= \det( \left\langle f_{i_\ell}, g_{j_k}\right\rangle).
\end{equation}
Therefore, for some positive integer $s$, we may identify $\bigwedge^{m-n} \mathbb E^m_t$ with 
some pseudo-Euclidean space $\mathbb E^N_s$,
where $N= {m\choose m-n}$.  
Let $ e_1,\dots,e_{n}, e_{n+1}, \dots,e_m$ be an oriented local orthonormal frame 
on an $n$-dimensional  pseudo-Riemann\-ian submanifold  $M$
in  $\mathbb E^m_t$ with $\varepsilon_B=  \langle e_B, e_B\rangle =\pm 1$ such that 
  $ e_1,\dots,e_{n}$ are tangent to $M$ and $  e_{n+1}, \dots, e_m$  are normal to $M$.
The map $\nu : M \rightarrow G(m-n, m) \subset   \mathbb E^N_s$  
from an oriented pseudo-Riemannian submanifold $M$ into 
$G(m-n, m)$ defined by 
\begin{equation}\label{MinkGaussTasvTanim}
\begin{array}{rcl}
\nu(p) = (e_{n+1} \wedge e_{n+2} \wedge \cdots \wedge e_{m}) (p)
\end{array}
\end{equation}
 is called the {\it Gauss map} of $M$ that is a smooth map 
 which assigns to a point $p$ in $M$  the oriented 
$(m-n)$-plane through  the origin of $\mathbb E^m_t$ and parallel  
to the normal  space of $M$ at $p$, \cite{Kim-Yoon-2}.
We put $\varepsilon =  \left\langle \nu, \nu \right\rangle = 
\varepsilon_{n+1} \varepsilon_{n+2} \cdots \varepsilon_m = \pm 1$ and
 \begin{equation}\label{l-like-n2-cmc}
\widetilde M^{N-1}_s (\varepsilon) =\left\{ \displaystyle 
 \begin{array}{lll} 
\displaystyle \mathbb S^{N-1}_s (1) \;\;& \mbox{in} \; \;\mathbb E^N_s & \mbox{if} \; \; \varepsilon =1
  \\ \notag
\displaystyle \mathbb H^{N-1}_{s-1} (-1) \;\;& \mbox{in} \;
 \; \mathbb E^N_s & \mbox{if} \; \; \varepsilon=-1.
\end{array}
\right.
    \end{equation}
Then the Gauss image $\nu(M)$ can be viewed as 
$\nu(M) \subset \widetilde M^{N-1}_s (\varepsilon)$.

\section{Space-like  surfaces in $\mathbb E^4_1$ with harmonic Gauss Map} \label{SectionHarmonic}

The Laplacian of the Gauss map of an $n$-dimensional oriented 
submanifold $M$ of a Euclidean space $\mathbb E^{n+2}$ was obtained in \cite{Dursun-Arsan}.
 By a similar calculation, for the Laplacian of the Gauss map $\nu$ given by 
 \eqref{MinkGaussTasvTanim} of an $n$-dimensional 
 oriented submanifold $M$ of a pseudo-Euclidean space $\mathbb E^{n+2}_t$ we have 

\begin{Lemma}\label{MinkGaussLapLem}
Let $M$ be an $n$-dimensional oriented submanifold of a pseudo-Euclidean space $\mathbb E^{n+2}_t$. Then 
 the Laplacian of Gauss map  $\nu = e_{n+1}\wedge e_{n+2} $ is given by 
\begin{eqnarray}
\nonumber \Delta\nu&=&\| h\|^2\nu+2\sum\limits_{1\leq j<k\leq n}\varepsilon_j\varepsilon_k
R^D (e_j,e_k;e_{n+1},e_{n+2}) e_j\wedge e_k\\&& +\nabla(\mathrm{tr}A_{n+1})\wedge e_{n+2}  \label{MinkGaussLaplEnSon}
+e_{n+1}\wedge 
 \nabla(\mathrm{tr}A_{n+2})\\&&\nonumber+n\sum\limits_{j=1}^n\varepsilon_j\omega_{(n+1)(n+2)} (e_j) H\wedge e_j,
\end{eqnarray}
where ${\| h \|}^2$ is the squared length of the second
fundamental form, $R^{D} \,$ the normal curvature tensor and 
$\nabla\tr A_r$  the gradient of $\tr A_r$.
\end{Lemma}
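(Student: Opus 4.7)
The plan is to compute $\Delta\nu$ pointwise from the definitions. Fix $p\in M$ and choose the tangent frame $\{e_1,\dots,e_n\}$ so that $\nabla_{e_i}e_j(p)=0$, equivalently $\omega_{ij}(e_k)(p)=0$ for tangent indices; since the claimed identity is tensorial, verifying it at $p$ in this frame suffices. With this reduction $\Delta\nu|_p=-\sum_k\varepsilon_k\widetilde\nabla_{e_k}\widetilde\nabla_{e_k}\nu|_p$. For the first derivative, apply the Leibniz rule to $\nu=e_{n+1}\wedge e_{n+2}$ together with the Weingarten formula \eqref{MEtomWeingarten}: the two normal-connection terms vanish because $e_\beta\wedge e_\beta=0$, leaving
\[
\widetilde\nabla_{e_k}\nu=-A_{n+1}(e_k)\wedge e_{n+2}-e_{n+1}\wedge A_{n+2}(e_k).
\]
For the second derivative, each tangent factor $A_\beta(e_k)$ is handled by the Gauss formula \eqref{MEtomGauss} (tangential piece $\nabla_{e_k}(A_\beta(e_k))$, normal piece $h(e_k,A_\beta(e_k))$) and each normal factor $e_\beta$ by Weingarten (tangential piece $-A_\beta(e_k)$, $\omega_{(n+1)(n+2)}$-twist to the other normal).

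After summing $-\sum_k\varepsilon_k$, the resulting expression splits into four blocks, each identified via a structural equation. (i) The normal pieces $h(e_k,A_\beta(e_k))$ retain only the component along the complementary normal after wedging (the parallel component dies), and their sum yields exactly $\|h\|^2\nu$ by the definition \eqref{Mink2esFormUzTanim}. (ii) The two tangential-Weingarten contributions, from $\widetilde\nabla_{e_k}e_{n+2}$ and $\widetilde\nabla_{e_k}e_{n+1}$, combine to $-2\sum_k\varepsilon_k A_{n+1}(e_k)\wedge A_{n+2}(e_k)$; expanding each $A_\beta(e_k)$ in the tangent basis and antisymmetrizing the wedge produces coefficients $\sum_k\varepsilon_k(h^{n+1}_{ik}h^{n+2}_{jk}-h^{n+1}_{jk}h^{n+2}_{ik})$, which the Ricci equation \eqref{MinkRicciEquation} identifies with $R^D(e_i,e_j;e_{n+1},e_{n+2})$, matching the second summand in the lemma. (iii) The piece $\sum_k\varepsilon_k\nabla_{e_k}(A_{n+1}(e_k))$, and the analogous one for $A_{n+2}$, are unfolded via Codazzi \eqref{MinkCodazziversion2} in the form $h^{n+1}_{jk,k}=h^{n+1}_{kk,j}$: expanding $\nabla_{e_k}(A_{n+1}(e_k))=\sum_j\varepsilon_j e_k(h^{n+1}_{jk})e_j$ at $p$ and rewriting via Codazzi produces the leading term $\nabla(\tr A_{n+1})$ plus two correction families --- one proportional to $\tr A_{n+2}\,\omega_{(n+1)(n+2)}(e_j)e_j$ and one to $\omega_{(n+1)(n+2)}(e_k)A_{n+2}(e_k)$. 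The leading parts wedged appropriately give the two gradient terms $\nabla(\tr A_{n+1})\wedge e_{n+2}$ and $e_{n+1}\wedge\nabla(\tr A_{n+2})$ in the lemma.

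Finally, (iv) collect the remaining $\omega_{(n+1)(n+2)}$-twist terms from (iii) together with the ones generated when Weingarten was applied to $e_{n+1},e_{n+2}$ in the second differentiation. The $A_\beta$-valued pieces of the form $\omega_{(n+1)(n+2)}(e_k)A_\beta(e_k)$ coming from the two sources cancel pairwise, while the trace-valued pieces assemble into $\sum_j\varepsilon_j\omega_{(n+1)(n+2)}(e_j)\bigl(\varepsilon_{n+1}\tr A_{n+1}e_{n+1}+\varepsilon_{n+2}\tr A_{n+2}e_{n+2}\bigr)\wedge e_j$, which by \eqref{MinkOrtEgTanim} equals $n\sum_j\varepsilon_j\omega_{(n+1)(n+2)}(e_j)H\wedge e_j$. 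The hard part is precisely this last bookkeeping: four families of $\omega_{(n+1)(n+2)}$-contributions arising from conceptually distinct operations must be matched carefully to exhibit both the pairwise cancellation of the shape-operator-valued pieces and the consolidation of the trace-valued ones into the mean-curvature wedge.
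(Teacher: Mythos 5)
Your computation is correct and is essentially the argument the paper intends: the paper omits the proof, citing the analogous direct computation in the Euclidean case (Dursun--Arsan), and your frame calculation --- geodesic tangent frame, Leibniz rule with Gauss/Weingarten, then identifying the four blocks via \eqref{Mink2esFormUzTanim}, the Ricci equation, Codazzi, and \eqref{MinkOrtEgTanim}, with the pairwise cancellation of the $\omega_{(n+1)(n+2)}A_\beta$-terms --- is exactly that computation adapted to the indefinite signs. The only caveat is a harmless sign-convention ambiguity in step (ii): with $R^D(e_i,e_j;e_{n+1},e_{n+2})=\langle R^D(e_i,e_j)e_{n+1},e_{n+2}\rangle$ the coefficient produced by $-2\sum_k\varepsilon_k A_{n+1}(e_k)\wedge A_{n+2}(e_k)$ is $\sum_k\varepsilon_k\bigl(h^{n+1}_{jk}h^{n+2}_{ik}-h^{n+1}_{ik}h^{n+2}_{jk}\bigr)$, the negative of the one you wrote, but the identification via \eqref{MinkRicciEquation} then yields the stated term, so the final formula is unaffected.
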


\begin{Remark}\label{MinRemark1}
From \eqref{MinkGaussLaplEnSon} we see that if an $n$-dimensional submanifold $M$ of 
$\mathbb  E^{n+2}_t$ has pointwise 1-type Gauss map of the first kind, then  equation 
\eqref{PW1TypeDefinition} is satisfied for $f=\| h\|^2$ and $C=0$.
\end{Remark}

The Gauss map of a surface $M$ in $\mathbb E^4_1$  is said to be harmonic 
if $\Delta \nu = 0$. 
Clearly, a harmonic Gauss map is of (global) 1-type of the first kind. In the Euclidean space 
$\mathbb E^4$, a plane is the only surface with harmonic Gauss map. However, in the Minkowski space 
$\mathbb E^4_1$ there are non-planar surfaces   with harmonic Gauss map.

\begin{Lemma} \label{MinkYuzOrtEgrVektPariseNormKnDuzver1} \cite{ChenCentEur2009} 
Let $M$ be a space-like surface with parallel mean curvature vector $H$ in $\mathbb E^4_1$. Then we have:
\begin{enumerate}
\item[(a)] $\langle H,H\rangle$ is constant and 
\item[(b)]  $[A_H,A_\xi]=0$ for any normal vector field $\xi$.
\end{enumerate}
\end{Lemma}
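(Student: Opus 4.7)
The plan is to exploit two basic facts: that parallel mean curvature means $D_X H = 0$ for every tangent $X$, and that the normal connection $D$ is compatible with the induced (indefinite) metric on the normal bundle. Combined with the Ricci equation \eqref{MinkRicciEquation}, both claims drop out immediately.

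For part (a), I would simply invoke metric compatibility of $D$. For any tangent vector $X$ on $M$,
$$X\langle H,H\rangle = 2\langle D_X H, H\rangle = 0,$$
because $DH = 0$. Hence $\langle H,H\rangle$ is locally constant, and so constant on (each connected component of) $M$.

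For part (b), the key observation is that $DH = 0$ forces the normal curvature to annihilate $H$:
$$R^D(X,Y)H = D_X D_Y H - D_Y D_X H - D_{[X,Y]} H = 0$$
for all tangent $X,Y$. Substituting $\xi = H$ in the Ricci equation \eqref{MinkRicciEquation} then gives
$$\langle [A_H, A_\eta] X, Y\rangle = \langle R^D(X,Y) H, \eta\rangle = 0$$
for every normal field $\eta$ and every pair of tangent vectors $X, Y$. Since $M$ is space-like, its tangent bundle carries a positive-definite induced metric, and this non-degenerate pairing forces $[A_H, A_\eta] = 0$.

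I anticipate no real obstacle: both parts are direct consequences of $DH=0$ together with the structural equations. The only mild subtlety is that $H$ is itself a normal vector field rather than a fixed normal vector, so one must note that the Ricci identity is $C^\infty(M)$-tensorial in its normal slot and therefore remains valid after the substitution $\xi = H$. This is routine, and the entire argument amounts to a few lines.
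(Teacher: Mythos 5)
Your proof is correct: part (a) is exactly metric compatibility of $D$ applied to $DH=0$, and part (b) follows since $DH=0$ gives $R^D(X,Y)H=0$, whence the Ricci equation \eqref{MinkRicciEquation} with $\xi=H$ and non-degeneracy (indeed positive definiteness) of the induced metric on the space-like surface force $[A_H,A_\eta]=0$; your remark on tensoriality in the normal slot is the right point to flag. The paper itself offers no proof, quoting the lemma from \cite{ChenCentEur2009}, and your argument is the standard one used there, so there is nothing to reconcile.
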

By combining the part (b) of Lemma \ref{MinkYuzOrtEgrVektPariseNormKnDuzver1} and the 
Ricci equation \eqref{MinkRicciEquation} we state the following lemma for later use:

\begin{Lemma}\label{MinkYuzOrtEgrVektPariseNormKnDuz}
Let $M$ be a non-maximal space-like  surface in the Minkowski space $\mathbb E^4_1$. If the mean
curvature vector $H$ of $M$ is  parallel, then the normal bundle of $M$ is flat, i.e., $R^D \equiv0$.
\end{Lemma}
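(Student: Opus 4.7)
The plan is to combine Lemma \ref{MinkYuzOrtEgrVektPariseNormKnDuzver1}(b) with the Ricci equation \eqref{MinkRicciEquation}, using the fact that the normal bundle of a space-like surface in $\mathbb E^4_1$ has rank $2$, so the curvature tensor $R^D$ is essentially determined by a single scalar component.

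First I would fix a local orthonormal frame $\{e_1,e_2,e_3,e_4\}$ adapted to $M$, with $\{e_3,e_4\}$ a frame of the normal bundle and $\varepsilon_\beta=\langle e_\beta,e_\beta\rangle$. Because $M$ is space-like in $\mathbb E^4_1$, the normal bundle is Lorentzian, so one of $e_3,e_4$ is time-like and the other is space-like. Write $H=\alpha e_3+\beta e_4$ for some smooth functions $\alpha,\beta$. Since $M$ is non-maximal, $H\neq 0$, so $(\alpha,\beta)\neq(0,0)$ at every point (after possibly shrinking to an open set where this holds). Linearity of the shape operator in its normal argument gives $A_H=\alpha A_{e_3}+\beta A_{e_4}$.

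Now I would apply Lemma \ref{MinkYuzOrtEgrVektPariseNormKnDuzver1}(b) with $\xi=e_3$ and $\xi=e_4$ in turn. Since $[A_{e_3},A_{e_3}]=[A_{e_4},A_{e_4}]=0$, the identities $[A_H,A_{e_3}]=0$ and $[A_H,A_{e_4}]=0$ reduce, respectively, to
\begin{equation*}
\beta\,[A_{e_4},A_{e_3}]=0,\qquad \alpha\,[A_{e_3},A_{e_4}]=0.
\end{equation*}
Because $(\alpha,\beta)\neq(0,0)$ at every point, at least one of these two equations forces $[A_{e_3},A_{e_4}]=0$, and this conclusion is pointwise. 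Hence the shape operators corresponding to the two normal directions commute on all of $M$.

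Finally, I would feed this into the Ricci equation \eqref{MinkRicciEquation}: for all tangent $X,Y$,
\begin{equation*}
\langle R^D(X,Y)e_3,e_4\rangle=\langle[A_{e_3},A_{e_4}]X,Y\rangle=0.
\end{equation*}
Since $R^D$ is antisymmetric in its normal arguments, the diagonal components $\langle R^D(X,Y)e_\beta,e_\beta\rangle$ vanish automatically, and in rank $2$ the single off-diagonal component $\langle R^D(X,Y)e_3,e_4\rangle$ determines the tensor completely. Therefore $R^D\equiv 0$, which is the desired conclusion. I do not foresee any real obstacle: the only point that needs a moment of care is the case when $H$ is light-like, but even then one of $\alpha,\beta$ must be non-zero, so the argument above goes through without modification.
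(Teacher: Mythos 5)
Your proof is correct and is essentially the paper's own argument: the lemma is obtained there precisely by combining part (b) of Lemma \ref{MinkYuzOrtEgrVektPariseNormKnDuzver1} with the Ricci equation \eqref{MinkRicciEquation}, and you have simply written out the rank-two bookkeeping ($[A_{e_3},A_{e_4}]=0$ forces the single essential component of $R^D$ to vanish). The only cosmetic remark is that the parenthetical ``shrinking to an open set'' is unnecessary: since $H$ is a parallel section of the normal bundle, on a connected surface it either vanishes identically (excluded by non-maximality) or is nowhere zero, so $(\alpha,\beta)\neq(0,0)$ holds globally and the conclusion $R^D\equiv 0$ is obtained on all of $M$.
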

 

\begin{Prop} \label{PropMax3kosuldenk}
Let $M$ be an oriented maximal surface in the Minkowski space  $\mathbb E^4_1$. 
Then the  Gauss map $\nu$ of $M$ is harmonic if and only if 
$M$ is a flat surface in    $\mathbb E^4_1$ with flat normal bundle.
\end{Prop}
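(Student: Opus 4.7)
The plan is to apply Lemma \ref{MinkGaussLapLem} directly with $n=2$ and use the maximality hypothesis $H=0$ to kill most of the terms on the right-hand side of \eqref{MinkGaussLaplEnSon}. Since $H=0$ on an oriented maximal surface, we have $\tr A_3 = \tr A_4 = 0$, so $\nabla(\tr A_3)$ and $\nabla(\tr A_4)$ vanish identically, and the last term involving $H\wedge e_j$ also drops out. The sum over $1\leq j<k\leq n$ collapses to the single index pair $(j,k)=(1,2)$, and writing $\varepsilon_1=\varepsilon_2=1$ for the space-like tangent frame, the Laplacian formula reduces to
\begin{equation*}
\Delta\nu = \|h\|^2\,\nu + 2\,R^D(e_1,e_2;e_3,e_4)\, e_1\wedge e_2.
\end{equation*}

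Next, I would observe that $\nu = e_3\wedge e_4$ and $e_1\wedge e_2$ are linearly independent vectors in $\bigwedge^2\mathbb{E}^4_1$; indeed, under the inner product \eqref{inner-prod} they are mutually orthogonal, since $\langle e_1\wedge e_2, e_3\wedge e_4\rangle$ is the determinant of a matrix with zero entries. Hence $\Delta\nu=0$ holds if and only if the two scalar coefficients vanish separately:
\begin{equation*}
\|h\|^2 = 0 \qquad\text{and}\qquad R^D(e_1,e_2;e_3,e_4)=0.
\end{equation*}

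Finally, I would translate each of these scalar conditions into the geometric statements in the proposition. Using the maximality $H=0$ in the Gauss-curvature identity \eqref{Gauss-curvature}, we get $K=-\|h\|^2/2$, so $\|h\|^2=0$ is equivalent to $K\equiv 0$, i.e., $M$ is flat. For the second condition, note that the normal bundle of the space-like surface $M$ in $\mathbb E^4_1$ has rank $2$ and the tangent bundle also has rank $2$, so by the symmetries of $R^D$ the scalar $R^D(e_1,e_2;e_3,e_4)$ is its only independent component; its vanishing is thus equivalent to $R^D\equiv 0$, i.e., flat normal bundle. Combining the two equivalences yields the claimed characterization.

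The argument is essentially a bookkeeping exercise once Lemma \ref{MinkGaussLapLem} is in hand; the only real point requiring care is the linear independence of the two vectors $\nu$ and $e_1\wedge e_2$ in $\bigwedge^2\mathbb{E}^4_1$, which allows us to split $\Delta\nu=0$ into two independent scalar equations. Apart from that, no calculation is delicate, and the reverse implication is immediate by plugging $\|h\|^2=0$ and $R^D=0$ back into the simplified formula for $\Delta\nu$.
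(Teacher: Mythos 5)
Your proposal is correct and follows essentially the same route as the paper: specialize Lemma \ref{MinkGaussLapLem} with $H=0$ to get $\Delta\nu=\|h\|^2\nu+2R^D(e_1,e_2;e_3,e_4)\,e_1\wedge e_2$, split using the independence of $\nu=e_3\wedge e_4$ and $e_1\wedge e_2$, and identify $\|h\|^2=-2K$ via \eqref{Gauss-curvature}. The paper merely substitutes $\|h\|^2=-2K$ before splitting and leaves the linear independence implicit, so your write-up is just a slightly more explicit version of the same argument.
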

\begin{proof}
Let $M$ be a  maximal surface in $\mathbb E^4_1$, i.e., $H\equiv 0$. Then, from \eqref{Gauss-curvature} we have $\|h\|^2=-2K$.
Thus, \eqref{MinkGaussLaplEnSon} implies
\begin{eqnarray}
\label{MaksYuzeyE41GausLapl}\Delta\nu&=&-2K\nu+2R^D (e_1,e_2;e_3,e_4) e_1\wedge e_2.
\end{eqnarray}
Therefore, $\nu$ is harmonic if and only if $K=0$ and $R^D=0$. Hence the proof is completed.
\end{proof}

Next, we obtain a non-planar maximal surface in  $\mathbb E^4_1$ with harmonic Gauss map. 
\begin{Example} \cite{Chen-Ishikawa}
Let $\Omega$ be an open, connected set in $\mathbb R^2$ and 
$\phi:\Omega\rightarrow\mathbb R$  a smooth function. 
We consider the surface $M$ in the Minkowski space $\mathbb E^4_1$ given by
\begin{eqnarray}\label{MinkMargTrapnottooComplete}
x(u,v) = (\phi (u, v), u, v,\phi (u, v)).
\end{eqnarray}
This surface lies in  the degenerate hyperplane 
$\mathcal{H}_0=\{(x_1,x_2,x_3,x_4)\in \mathbb E^4_1\ |\ x_1=x_4 \}$.
By a direct calculation, we see that $M$ is a flat surface with 
flat normal bundle and the mean curvature 
vector $H$ of $M$ in  $\mathbb E^4_1$ is given by  
\begin{equation}\label{MinkMargTrapnottooCompleteH}
H=(\Delta  \phi,0,0,\Delta \phi).
\end{equation}
Therefore, $M$ is maximal if and only if $\phi$ is harmonic. 

Hence, Proposition \ref{PropMax3kosuldenk} implies that if  
$\phi$ is a harmonic function, then the surface given by 
\eqref{MinkMargTrapnottooComplete} has harmonic Gauss map.
\end{Example}

\begin{Prop}\label{MinkMaksNormalDemDuzClassTheorem}
A non-planar flat   maximal surface  in the Minkowski space  $ \mathbb E^4_1$ with flat normal bundle is  
congruent to the surface given by \eqref{MinkMargTrapnottooComplete} for a smooth  harmonic function 
$\phi:\Omega\subset\mathbb R^2\rightarrow\mathbb R$, where $\Omega$ is an open set in $\mathbb R^2$.
\end{Prop}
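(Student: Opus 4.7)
The plan is to use the three hypotheses (flat, maximal, flat normal bundle) to extract a parallel null direction in the normal bundle of $M$; this produces a constant null vector $v\in\mathbb E^4_1$ normal to $M$, forcing $M$ into a degenerate hyperplane, where the parametrization \eqref{MinkMargTrapnottooComplete} is essentially automatic.

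First I would choose an adapted oriented orthonormal frame $\{e_1,e_2,e_3,e_4\}$ with $e_1,e_2$ tangent to $M$, $e_3,e_4$ normal, and $\varepsilon_3=-1$, $\varepsilon_4=1$. Maximality makes $A_3$ and $A_4$ traceless, so they take the form
$$
A_3=\begin{pmatrix}a&b\\b&-a\end{pmatrix},\qquad A_4=\begin{pmatrix}c&d\\d&-c\end{pmatrix}.
$$
The Gauss equation \eqref{MinkGaussEquation} combined with $K=0$ gives $a^2+b^2=c^2+d^2$, and the Ricci equation \eqref{MinkRicciEquation} with $R^D=0$ gives $[A_3,A_4]=0$, which for matrices of this form amounts to $ad=bc$. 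The two conditions together force $(c,d)=\pm(a,b)$, i.e., $A_4=\pm A_3$. Non-planarity guarantees that $h\not\equiv0$; restricting to a connected component of $\{h\neq 0\}$ and replacing $e_4$ by $-e_4$ if necessary (an allowed ambient isometry), I may assume $A_4=A_3$. Setting $\xi_+:=e_3+e_4$, which is null since $\varepsilon_3+\varepsilon_4=0$, this gives
$$
h(X,Y)=h^3(X,Y)\,\xi_+.
$$

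Next I would show that $\mathbb R\xi_+$ descends to a genuinely $\widetilde\nabla$-parallel null line in $\mathbb E^4_1$. Since $\xi_+$ is null, $\langle h(X,Y),\xi_+\rangle=0$ for all $X,Y$, so $A_{\xi_+}=0$; the Weingarten formula then reads $\widetilde\nabla_X\xi_+=D_X\xi_+=\omega_{34}(X)\,\xi_+$. A short structure-equation calculation gives $\langle R^D(X,Y)e_3,e_4\rangle=d\omega_{34}(X,Y)$, so $R^D=0$ is equivalent to $d\omega_{34}=0$. By the Poincar\'e lemma, $\omega_{34}=-d\ln f$ locally for a positive function $f$, and $v:=f\xi_+$ then satisfies $\widetilde\nabla_X v=0$ for every tangent $X$. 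Flatness of the ambient connection promotes $v$ to a constant non-zero null vector of $\mathbb E^4_1$. Because $v$ is normal to $M$, the function $\langle x,v\rangle$ has vanishing differential along $M$, so $M$ lies in a degenerate affine hyperplane orthogonal to $v$.

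The remainder is concrete. Applying a Lorentz isometry sending $v$ to $(1,0,0,1)$ and translating the level constant to zero places $M\subset\mathcal H_0=\{x_1=x_4\}$. The induced quadratic form on $\mathcal H_0$ is degenerate along $v$ and equals $dx_2^2+dx_3^2$ on the transverse directions, so any space-like surface in $\mathcal H_0$ is locally a graph $x(u,v)=(\phi(u,v),u,v,\phi(u,v))$, which is exactly \eqref{MinkMargTrapnottooComplete}. The Example's formula $H=(\Delta\phi,0,0,\Delta\phi)$ together with $H=0$ then forces $\Delta\phi=0$, so $\phi$ is harmonic. I expect the main obstacle to be the middle step: producing a $\widetilde\nabla$-parallel null normal requires both the algebraic identity $A_{\xi_+}=0$ (extracted from maximality, flatness, and flat normal bundle via $A_4=A_3$) and the integration of $\omega_{34}$ (which uses flat normal bundle essentially through $d\omega_{34}=0$). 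A secondary but non-trivial point is the global reduction from $A_4=\pm A_3$ to $A_4=A_3$, which requires a connectedness argument together with non-planarity to rule out sign flips.
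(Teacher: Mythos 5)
Your proposal is correct, and it departs from the paper's proof in its second half, so a comparison is worth recording. The opening algebraic step is the same as the paper's: tracelessness from maximality, $\det A_3=\det A_4$ from $K=0$ via the Gauss equation \eqref{MinkGaussEquation}, and $[A_3,A_4]=0$ from $R^D=0$ via the Ricci equation \eqref{MinkRicciEquation} together force $A_4=\pm A_3$, so $h$ takes values along a single null normal direction. (One bookkeeping slip: with your convention $\varepsilon_3=-1$, $\varepsilon_4=1$ and the normalization $A_4=A_3$, the image of $h$ is spanned by $e_4-e_3$, not by $\xi_+=e_3+e_4$; the argument is unchanged provided you run it with whichever null combination $\xi$ actually satisfies $h\parallel\xi$, since it is exactly that $\xi$ for which $A_\xi=0$ and $D_X\xi=\pm\,\omega_{34}(X)\,\xi$.) From this point the paper chooses flat coordinates with $\omega_{12}=0$, writes $x_{uu},x_{uv},x_{vv}$ as multiples of $e_3-e_4$ as in \eqref{MinkMaksNormalDemDuzClassGauss1}, and shows by an ODE argument on $y=x_{uu}$ (differentiating with the Weingarten formula, so $y_u=\gamma_1 y$, $y_v=\gamma_2 y$) that all second derivatives are multiples of one constant light-like vector $\eta_0$; integrating gives $x=\phi\,\eta_0+u\,\eta_1+v\,\eta_2$, and $x_{uu}+x_{vv}=0$ yields harmonicity of $\phi$ directly. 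You instead rescale the null normal, using that $R^D=0$ is equivalent to $d\omega_{34}=0$ and the Poincar\'e lemma, to produce a $\widetilde\nabla$-parallel (hence constant) null vector normal to $M$, deduce that $\langle x,\cdot\rangle$ against this vector is constant so that $M$ lies in a degenerate hyperplane congruent to $\mathcal H_0$, and then obtain the graph form \eqref{MinkMargTrapnottooComplete} from space-likeness and harmonicity of $\phi$ from the mean curvature formula \eqref{MinkMargTrapnottooCompleteH} of the Example. Your route gives a coordinate-free construction of the constant null direction and isolates clearly where flat normal bundle enters (once in the Ricci equation, once in integrating $\omega_{34}$); the paper's route avoids the hyperplane-and-graph discussion and delivers the explicit parametrization and the harmonicity of $\phi$ in a single integration. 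Both treatments handle the sign ambiguity $A_4=\pm A_3$ and the zero set of $h$ only locally; your explicit restriction to a connected component of $\{h\neq0\}$ is no weaker than the paper's ``without loss of generality,'' so I do not count it as a gap.
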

\begin{proof}
Let $M$ be a  non-planar flat maximal surface  in $\mathbb  E^4_1$ with flat normal bundle. 

Since $M$ is flat, there exist local coordinates $(u, v)$ on $M$ such that $e_1=\partial/\partial u$, $e_2=\partial/ \partial v$.
Then the induced metric tensor is given by  $g=du^2+dv^2$ and also $\omega_{12}\equiv 0$. Let $\{e_3, e_4\}$ be 
a local orthonormal normal frame  on $M$ with $\varepsilon_3=-\varepsilon_4=1$, where $\varepsilon_{\beta} = 
\left\langle e_{\beta}, e_{\beta} \right\rangle $. As $M$ is maximal,  we have 
$A_\beta=(h^\beta_{ij})$ with $h^\beta_{11} + h^\beta_{22} =0 $, $\beta=3,4$, that is,  
$\mathrm{tr}A_3=\mathrm{tr}A_4 =0$. Moreover, since $M$ is flat, from  the Gauss equation \eqref{MinkGaussEquation}
we get  $\det A_3=\det A_4$. 
Therefore,  the eigenvalues of $A_3$ and $A_4$ are equal which imply that $A_3=\mp A_4$ as  $R^D=0$. 
Without loss of generality, we may take  $A_3=A_4$. 

Let $\Omega$ be an open set in $\mathbb R^2$ and  $x:\Omega\rightarrow M\subset \mathbb  E^4_1$ 
be an isometric immersion. From the Gauss formula we obtain that
\begin{equation} \label{MinkMaksNormalDemDuzClassGauss1}  
x_{uu}= h^3_{11}(e_3-e_4), \quad  x_{uv}= h^3_{12}(e_3-e_4), \quad 
 x_{vv}= -h^3_{11}(e_3-e_4)
\end{equation}
 as $\omega_{12}\equiv 0$. Also, the first and second equations in \eqref{MinkMaksNormalDemDuzClassGauss1} 
   imply that
\begin{equation}\label{MinkMaksNormalDemDuzClassDenk1}
x_{uu}+x_{vv}=0.
\end{equation}
Moreover, $x_{uu}$, $x_{uv}$ and $x_{vv}$ are pairwise linearly dependent light-like vector fields. 

On the other hand, by a direct calculation, using the Weingarten formula and 
\eqref{MinkMaksNormalDemDuzClassGauss1}, we obtain  
\begin{eqnarray}
\label{MinkMaksNormalDemDuzClassEq1} x_{uuu}&=&\left(\partial_u\left(h^3_{11}\right)
+\omega_{34}(\partial_u)h^3_{11}\right)(e_3-e_4),\\
\label{MinkMaksNormalDemDuzClassEq2} x_{uuv}&=&\left(\partial_v\left(h^3_{11}\right)
+\omega_{34}(\partial_v)h^3_{11}\right)(e_3-e_4).
\end{eqnarray}

Now we define a vector valued function $y=(y^1,y^2,y^3,y^4):\Omega\rightarrow \mathbb E^4_1$ as $y=x_{uu}$. 
From equations  \eqref{MinkMaksNormalDemDuzClassGauss1},  \eqref{MinkMaksNormalDemDuzClassEq1} 
and \eqref{MinkMaksNormalDemDuzClassEq2} 
we obtain  $y_u=\gamma_1 y$ and $y_v=\gamma_2 y$ for some smooth functions $\gamma_1$
 and $\gamma_2$. Thus, the coordinate functions of $y$ satisfy
\begin{eqnarray}
 y^i_u=\gamma_1 y^i, \quad y^i_v=\gamma_2 y^i,\quad i=1,2,3,4
\end{eqnarray}
By solving these equations, we get $y^j=c_jy^1,\;i=2,3,4$, for some constants $c_j\in\mathbb R$. 
Thus, we have
\begin{eqnarray}
\label{MinkMaksNormalDemDuzClassDenk2} x_{uu}&=&y^1 \mathbf{\eta}_0,
\end{eqnarray}
where $\mathbb{\eta}_0=(1,c_2,c_3,c_4)$ is a constant light-like  vector. 
In a similar way, we obtain 
\begin{equation} \label{MinkMaksNormalDemDuzClassDenk3} 
x_{uv}= \phi_2 \mathbf{\eta}_0  \quad \mbox{and}    \quad  x_{vv}  =   \phi_3 \mathbf{\eta}_0, 
\end{equation} 
where $\phi_2,\phi_3:\Omega \subset \mathbb R^2  \rightarrow\mathbb R$ are some smooth functions. 
By integrating \eqref{MinkMaksNormalDemDuzClassDenk2} and \eqref{MinkMaksNormalDemDuzClassDenk3}, 
 we obtain  
 $$  
 x(u, v) =\phi (u, v) \mathbf{\eta}_0+u\mathbf{\eta}_1+ v \mathbf{\eta}_2,
 $$ 
where $\phi:\Omega\rightarrow\mathbb R$ is  a smooth function and 
$\mathbf{\eta}_1$, $\mathbf{\eta}_2$ are constant vectors  
such that $ \left\langle \eta_0, \eta_i \right\rangle = 0$, 
$ \left\langle \eta_i, \eta_j \right\rangle = \delta_{ij}, \; i, j=1, 2.$
Equation \eqref{MinkMaksNormalDemDuzClassDenk1} implies  that $\phi$ is harmonic. By choosing 
 $\mathbf{\eta}_0=(1,0,0,1)$, $\mathbf{\eta}_1=(0,1,0,0)$ and 
 $\mathbf{\eta}_2=(0,0,1,0)$, the proof is completed. 
\end{proof}
By combining Proposition \ref {PropMax3kosuldenk} and 
Proposition \ref{MinkMaksNormalDemDuzClassTheorem} we state the following classification theorem 
for maximal surfaces in $ \mathbb E^4_1$ with harmonic Gauss map:
\begin{theorem} \label{PropMax3kosulTheorem}
An oriented  maximal surface with harmonic Gauss map in the Minkowski space $ \mathbb E^4_1$ is either 
an open part of a space-like plane or congruent to  a surface
given by \eqref{MinkMargTrapnottooComplete} for a smooth harmonic function 
$\phi:\Omega\subset\mathbb R^2\rightarrow\mathbb R$, where $\Omega$ is an open set in $\mathbb R^2$.
\end{theorem}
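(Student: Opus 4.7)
The plan is to reduce the theorem to the combination of Proposition \ref{PropMax3kosuldenk} and Proposition \ref{MinkMaksNormalDemDuzClassTheorem}, and then handle the planar versus non-planar dichotomy separately. Since the hypothesis is that $M$ is oriented, maximal, and has harmonic Gauss map, Proposition \ref{PropMax3kosuldenk} immediately tells us that $M$ is a flat surface in $\mathbb E^4_1$ with flat normal bundle. Thus the classification task is reduced to describing oriented maximal surfaces satisfying $K\equiv 0$ and $R^D\equiv 0$.

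Next I would split into two cases according to whether $M$ is totally geodesic or not. If $M$ is an open part of a space-like plane, then the tangent plane distribution is constant along $M$, hence the Gauss map $\nu$ is a constant vector in $\bigwedge^{2}\mathbb E^{4}_{1}$ and $\Delta\nu=0$ trivially, so this gives the first class of examples in the statement. Otherwise $M$ is non-planar, and by Proposition \ref{MinkMaksNormalDemDuzClassTheorem} (which handles exactly the non-planar flat maximal surfaces with flat normal bundle) the surface is congruent to one of the form \eqref{MinkMargTrapnottooComplete} for a smooth harmonic function $\phi$ on some open $\Omega\subset\mathbb R^2$.

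To close the argument I would also verify the converse direction, which the theorem implicitly asserts by describing its full classification target: a space-like plane obviously has $\Delta\nu=0$, while for a surface of the form \eqref{MinkMargTrapnottooComplete} with $\phi$ harmonic, formula \eqref{MinkMargTrapnottooCompleteH} shows $H\equiv 0$, so the surface is maximal; since the Example already notes that it is flat with flat normal bundle, Proposition \ref{PropMax3kosuldenk} applied in the converse direction yields $\Delta\nu=0$.

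There is no serious obstacle here, since all the geometric content has been absorbed into the two prior propositions; the only thing to be careful about is the dichotomy \emph{planar vs.\ non-planar}, which is needed because Proposition \ref{MinkMaksNormalDemDuzClassTheorem} excludes the planar case, and the formula \eqref{MinkMargTrapnottooComplete} with harmonic $\phi$ need not parametrize a plane (indeed it does so only when $\phi$ is affine). Modulo that case split, the proof is a one-line citation of the two previous propositions.
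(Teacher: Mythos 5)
Your argument is correct and is essentially the paper's own proof: the theorem is stated there precisely ``by combining'' Proposition \ref{PropMax3kosuldenk} and Proposition \ref{MinkMaksNormalDemDuzClassTheorem}, with the planar case accounting for the space-like plane alternative. Your explicit planar/non-planar dichotomy and the verification of the converse are fine refinements of the same route, not a different approach.
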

Now we investigate non-maximal space-like surfaces in $ \mathbb E^4_1$ with harmonic Gauss map.
\begin{theorem}\label{MinkNonMaksHarmonikTheo}
Let $M$ be an oriented  non-maximal space-like surface in the Minkowski space $\mathbb E^4_1$. 
Then the Gauss map $\nu$ of $M$ is harmonic if and only if $M$ is flat in $\mathbb E^4_1$ 
with  light-like and parallel  mean curvature vector.
\end{theorem}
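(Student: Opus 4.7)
The plan is to apply the Laplacian formula of Lemma \ref{MinkGaussLapLem} to $\nu=e_3\wedge e_4$, where $\{e_1,e_2,e_3,e_4\}$ is an orthonormal frame with $e_1,e_2$ tangent to $M$ and $\varepsilon_3=-\varepsilon_4=1$. Expanding $\Delta\nu$ in the basis $\{e_I\wedge e_J\}_{I<J}$ of $\bigwedge^{2}\mathbb E^{4}_{1}$, the right-hand side splits cleanly into three types of terms: the coefficient of $\nu$ is $\|h\|^{2}$; the coefficient of $e_{1}\wedge e_{2}$ is $2R^{D}(e_{1},e_{2};e_{3},e_{4})$; and the four mixed $e_{i}\wedge e_{\beta}$ coefficients involve the gradients of $\mathrm{tr}\,A_{\beta}$ together with $\omega_{34}(e_{i})H$.

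For the ``if'' direction, I would first use the Gauss equation \eqref{Gauss-curvature} to obtain $\|h\|^{2}=4\langle H,H\rangle-2K=0$ from $K=0$ and $\langle H,H\rangle=0$; since $H$ is parallel, Lemma \ref{MinkYuzOrtEgrVektPariseNormKnDuz} gives $R^{D}=0$, so I can pass to a parallel normal frame with $\omega_{34}\equiv 0$. In that frame $DH=0$ reduces to $e_{i}(H_{3})=e_{i}(H_{4})=0$, whence $\mathrm{tr}\,A_{3}$ and $\mathrm{tr}\,A_{4}$ are constant. All five terms in Lemma \ref{MinkGaussLapLem} then vanish, giving $\Delta\nu=0$.

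For the converse, matching coefficients in $\Delta\nu=0$ yields $\|h\|^{2}=0$, $R^{D}=0$, and the system
\[
e_{i}(H_{3})=H_{4}\,\omega_{34}(e_{i}),\qquad e_{i}(H_{4})=H_{3}\,\omega_{34}(e_{i}),\qquad i=1,2,
\]
where $H_{\beta}$ denote the components of $H$ in the normal frame. Substituting into
\[
D_{e_{i}}H=\bigl(e_{i}(H_{3})-H_{4}\omega_{34}(e_{i})\bigr)e_{3}+\bigl(e_{i}(H_{4})-H_{3}\omega_{34}(e_{i})\bigr)e_{4}
\]
shows this system is equivalent to $DH\equiv 0$, so $H$ is already parallel. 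Since $\|h\|^{2}=0$, the Gauss equation reduces to $K=2\langle H,H\rangle$, so flatness of $M$ and light-likeness of $H$ are equivalent, and it remains to prove one of them.

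This last step is the main obstacle. Using $R^{D}=0$ I pass to a parallel normal frame where $\omega_{34}\equiv 0$ and $H_{3},H_{4}$ become constants. The Ricci equation \eqref{MinkRicciEquation} together with $R^{D}=0$ gives $[A_{3},A_{4}]=0$, so on the open subset where the two shape operators are not both scalar I can choose an orthonormal tangent frame of simultaneous eigenvectors, diagonalizing $A_{3}=\mathrm{diag}(a_{1},a_{2})$ and $A_{4}=\mathrm{diag}(b_{1},b_{2})$. Then $a_{1}+a_{2}=2H_{3}$ and $b_{1}+b_{2}=-2H_{4}$ are constants, and $\|h\|^{2}=0$ translates into $(a_{1}-a_{2})^{2}-(b_{1}-b_{2})^{2}=-4\langle H,H\rangle$ being constant. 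Applying the Codazzi equation \eqref{MinkCodazziversion2} with $h^{\beta}_{12}=0$ gives
\[
e_{1}(a_{1}-a_{2})=-2\omega_{12}(e_{2})(a_{1}-a_{2}),\quad e_{2}(a_{1}-a_{2})=2\omega_{12}(e_{1})(a_{1}-a_{2}),
\]
together with the same pair for $b_{1}-b_{2}$. Differentiating the constant combination $(a_{1}-a_{2})^{2}-(b_{1}-b_{2})^{2}$ then forces $\langle H,H\rangle\,\omega_{12}(e_{i})=0$ for $i=1,2$; hence either $\langle H,H\rangle=0$ directly, or $\omega_{12}\equiv 0$, in which case $\nabla_{e_{i}}e_{j}=0$ so that $K\equiv 0$ and again $\langle H,H\rangle=0$ via $K=2\langle H,H\rangle$. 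At the remaining points where $A_{3}$ and $A_{4}$ are both scalar ($a_{1}=a_{2}$ and $b_{1}=b_{2}$), the identity $\|h\|^{2}=0$ reduces to $a_{1}^{2}=b_{1}^{2}$, which directly gives $K=\langle H,H\rangle=0$. Combining the cases, $M$ is flat with light-like and parallel mean curvature vector, completing the proof.
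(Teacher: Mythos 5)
Your proof is correct and follows essentially the same route as the paper: both extract $\|h\|^2=0$, $R^D=0$ and $DH=0$ from Lemma \ref{MinkGaussLapLem}, and both establish that $H$ is light-like by simultaneously diagonalizing the commuting shape operators in a parallel normal frame and differentiating, via the Codazzi equations, the constant quantity forced by $\|h\|^2=0$. The only differences are organizational: the paper argues by contradiction with $e_3$ aligned along $H$ (so $\mathrm{tr}A_4=0$), whereas you work in a general parallel normal frame, identify the mixed wedge terms directly with $DH$, and treat the locus where both shape operators are scalar separately.
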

\begin{proof}
Let $M$ be an oriented  non-maximal space-like surface in  $\mathbb E^4_1$ with harmonic Gauss map $\nu$. 
Then we have $\Delta\nu=0$. From  \eqref{MinkGaussLaplEnSon} we obtain $\|h\|^2=0$ and $R^D=0$. 
That is, the normal bundle is flat. So we can choose a local parallel orthonormal 
normal frame $\{e_3, e_4\}$ on $M$. Thus  we have $\omega_{34}=0$, and from \eqref{MinkGaussLaplEnSon} 
\begin{eqnarray}
\label{Minkhkareesit0nonmakCod0}
\nabla(\mathrm{tr}A_3)\wedge e_{4}+e_3\wedge \nabla(\mathrm{tr}A_4)=0
\end{eqnarray}
which implies that  $\mathrm{tr}A_3$ and $\mathrm{tr}A_4$ are constants. Therefore, $DH=0$, that is, 
$H$ is parallel, and $\langle H,H\rangle $ is constant because of part (a) of Lemma \ref{MinkYuzOrtEgrVektPariseNormKnDuzver1}.

Now we will show that $H$ is light-like. Suppose that $H$ is not  light-like, that is, $\langle H,H\rangle\neq 0$.
As $\|h\|^2=0$ we have  $K=2\langle H,H\rangle \not = 0$  from \eqref{Gauss-curvature}. Thus,  $M$ is not flat.

On the other hand, since the normal bundle is flat and $\langle H,H\rangle\neq 0$,  
we can choose a local orthonormal frame field $\{e_1,e_2,e_3,e_4\}$ on $M$ such that 
$e_3= H/\alpha$ and $e_4$ are parallel,   the shape operators are diagonalized, and $e_1, e_2$ are eigenvectors of $A_3$, 
where $\alpha=\sqrt{|\langle H, H |\rangle}$.
So we have 
$A_3=\mathrm{diag} \left(h^3_{11}, h^3_{22}\right),\quad A_4=\mathrm{diag} \left(h^4_{11}, -h^4_{11}\right),$
$ h^3_{11}+h^3_{22}=2\alpha $   and  $\omega_{34}=0$. Considering these, it follows from 
Codazzi equation \eqref{MinkCodazziversion2} that 
\begin{eqnarray}
\label{Minkhkareesit0nonmakCod1} e_1(h^3_{22})=-e_1(h^3_{11})&=&\omega_{12}(e_2)\Big(h^3_{11}-h^3_{22}\Big),\\
\label{Minkhkareesit0nonmakCod2} e_1(h^4_{22})=-e_1(h^4_{11})&=&2\omega_{12}(e_2)h^4_{11},\\
\label{Minkhkareesit0nonmakCod3} e_2(h^3_{11})=-e_2(h^3_{22})&=&\omega_{12}(e_1)\Big(h^3_{11}-h^3_{22}\Big),\\
\label{Minkhkareesit0nonmakCod4} e_2(h^4_{11})=-e_2(h^4_{22})&=&2\omega_{12}(e_1)h^4_{11}.
\end{eqnarray}

As $\|h\|^2=0$  we have
\begin{eqnarray}
\label{Minkhkareesit0nonmakeYildiz} \big(h^3_{11}\big)^2+\big(h^3_{22}\big)^2=2\big(h^4_{11}\big)^2
\end{eqnarray}
from which we obtain
\begin{eqnarray} \notag 
 h^3_{11}e_1(h^3_{11})+h^3_{22}e_1(h^3_{22})&=&2h^4_{11}e_1(h^4_{11}),\\ \notag
 h^3_{11}e_2(h^3_{11})+h^3_{22}e_2(h^3_{22})&=&2h^4_{11}e_2(h^4_{11}).
\end{eqnarray}
Using  \eqref{Minkhkareesit0nonmakCod1}-\eqref{Minkhkareesit0nonmakCod4}, the above equations  become
\begin{eqnarray}
\label{Minkhkareesit0nonmake1_1} -\omega_{12}(e_2)\Big((h^3_{11}-h^3_{22})^2-4(h^4_{11})^2\Big)=&0,\\
\label{Minkhkareesit0nonmake2_1} \omega_{12}(e_1)\Big((h^3_{11}-h^3_{22})^2-4(h^4_{11})^2\Big)=&0.
\end{eqnarray}
Since $M$ is not flat, at least one of  $\omega_{12}(e_1)$ and  $\omega_{12}(e_2)$ is not zero.
Therefore,  \eqref{Minkhkareesit0nonmake1_1} and  \eqref{Minkhkareesit0nonmake2_1} imply that 
 $$(h^3_{11}-h^3_{22})^2=4(h^4_{11})^2.$$ 
Considering this and  \eqref{Minkhkareesit0nonmakeYildiz}  we obtain that  $h^3_{11}h^3_{22}+(h^4_{11})^2=0$.
Therefore, the Gauss curvature $K=\varepsilon_3(h^3_{11}h^3_{22}+(h^4_{11})^2)=0$ and hence $2\langle H,H\rangle  =K = 0$ 
which is a contradiction. As a result, $H$ is light-like. Since $\langle H,H\rangle=0$ and $\|h\|^2=0$, 
\eqref{Gauss-curvature} implies $K=0$, i.e. $M$ is flat.

Conversely, we assume that $M$ is a flat surface in  $\mathbb E^4_1$ with parallel and 
light-like mean curvature vector $H$, that is,  $K=\langle H,H\rangle=0$.
So  we have $\|h\|^2=0$ from \eqref{Gauss-curvature}. 
On the other hand, Lemma \ref{MinkYuzOrtEgrVektPariseNormKnDuz} 
implies that $M$ has flat normal bundle, i.e., $R^D=0$. 
Therefore, there exists a local parallel orthonormal frame $\{e_3,e_4\}$ of normal bundle of 
$M$ with $ \varepsilon_3=-\varepsilon_4=1$ and the shape operators 
$A_3$ and $A_4$ can be diagonalized simultaneously  by choosing 
a proper frame $\{e_1,e_2\}$ of tangent bundle of $M$, namely, we have  
$$A_\beta=\mathrm{diag} \left(h^\beta_{11},h^\beta_{22}\right),\quad \beta=3,4,$$
also, $\omega_{34}\equiv0$. 
Moreover, since $H$ is light-like, we have
$$\mathrm{tr}A_3=\mathrm{tr}A_4=\mu\neq 0\quad\mbox{and}\quad H=\frac \mu2(e_3-e_4).$$ 
In addition, since $H$ is parallel and $\omega_{34}=0$, $\mu$ is a constant.  
Thus, we have $\nabla(\mathrm{tr}A_3)=\nabla(\mathrm{tr}A_4)=0.$
Therefore, equation  \eqref{MinkGaussLaplEnSon} gives $\Delta\nu=0$. 
\end{proof}
A space-like surface in the Minkowski space $\mathbb E^4_1$ is called {\it marginally trapped} (or quasi-minimal) if its mean curvature 
vector is light-like at each point on the surface. We will use the following classification theorem of  marginally trapped surfaces with parallel mean curvature vector in 
the Minkowski space $\mathbb E^4_1$ obtained in \cite{ChenVeken2009Houston}.
\begin{theorem}   \label{TheoremChenVeken2009Houston} 
\cite{ChenVeken2009Houston} Let $M$ be a marginally trapped surface with parallel mean curvature vector in 
the Minkowski space-time $\mathbb E^4_1$. Then, with respect to suitable 
Minkowskian coordinates $(t,x_2,x_3,x_4)$ on $\mathbb E^4_1$, $M$ is an open part of 
one of the following six types of surfaces:
\begin{enumerate}
\item[(i)] a flat parallel biharmonic surface given by
$$x(u,v)=\left(\frac{1-b}{2}u^2+\frac{1+b}{2}v^2, u, v, \frac{1-b}{2}u^2+
\frac{1+b}{2}v^2\right),b\in\mathbb R;$$
\item[(ii)] a flat parallel surface given by
\begin{equation}
\label{TheoremChenVeken2009_Case1}
x(u, v)=a(\cosh u,\sinh u,\cos u,\sin u),\; a>0;
\end{equation}
\item[(iii)] a non-parallel flat biharmonic surface with constant light-like 
mean curvature vector, lying in the hyperplane $\mathcal{H}_0=\{(t,x_2,x_3,t)\},$ 
but not in the light cone $\mathcal{LC}$;
\item[(iv)] a non-parallel flat  surface  lying in the light cone 
$\mathcal{LC}$; 
\item[(v)] a non-parallel  surface  lying in the de Sitter space-time $S^3_1(r^2)$ 
for some $r>0$ such that the mean curvature vector $H'$ of $M$ in $S^3_1(r^2)$  satisfies 
$\langle H',H'\rangle=-r^2$;
\item[(vi)] a non-parallel surface  lying in the hyperbolic space $H^3(-r^2)$ for some 
$r>0$ such that the mean curvature vector $H'$ of $M$ in $H^3(-r^2)$  satisfies $\langle H',H'\rangle=r^2$.
\end{enumerate}
Conversely, all surfaces of type (i)-(vi) above give rise to marginally trapped 
surfaces with parallel mean curvature vector in $\mathbb E^4_1$.
\end{theorem}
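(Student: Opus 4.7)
The plan is to exploit the three hypotheses---spacelike, marginally trapped (so $\langle H,H\rangle=0$ and $H\neq 0$), and parallel mean curvature ($DH=0$)---to set up a distinguished parallel null frame in the normal bundle, reduce the Gauss, Codazzi and Ricci equations to a small system of PDEs, and then classify by case analysis on the rank and parallelism of the second fundamental form.

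First, since $H\neq 0$ and $DH=0$, Lemma \ref{MinkYuzOrtEgrVektPariseNormKnDuz} gives $R^D\equiv 0$, so the normal bundle admits a parallel pseudo-orthonormal null frame $\{\xi_1,\xi_2\}$ with $\xi_1=H$, $\langle\xi_1,\xi_2\rangle=-1$, $\langle\xi_i,\xi_i\rangle=0$ (after a null rotation to make $\xi_2$ parallel as well). Write $h(X,Y)=\alpha(X,Y)\xi_1+\beta(X,Y)\xi_2$; the mean-curvature identity \eqref{MinkOrtEgTanim} forces $\mathrm{tr}\,\alpha=2$ and $\mathrm{tr}\,\beta=0$, while Lemma \ref{MinkYuzOrtEgrVektPariseNormKnDuzver1}(b) yields $[A_{\xi_1},A_{\xi_2}]=0$, so one may simultaneously diagonalize the two shape operators in an adapted tangent frame $\{e_1,e_2\}$.

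Next, in this frame the Codazzi equations \eqref{MinkCodazziversion2} simplify---normal-connection terms drop out by parallelism---and express the $e_i$-derivatives of the eigenvalues of $\alpha$ and $\beta$ in terms of the single connection form $\omega_{12}$. The case analysis then proceeds as follows. If $\beta\equiv 0$, then $h$ takes values in the constant null direction $\xi_1$, and integration of the Gauss formula (in the spirit of the argument used in Proposition \ref{MinkMaksNormalDemDuzClassTheorem}) places $M$ inside the degenerate affine hyperplane $\mathcal H_0=\xi_1^\perp$, yielding types (i) and (iii) according to whether $\bar\nabla h=0$ or not. If $\beta\not\equiv 0$ and $\bar\nabla h=0$, a direct integration of the Gauss and Weingarten formulas, together with $K=0$ from \eqref{Gauss-curvature}, produces the explicit flat parallel surface (ii). If $\beta\not\equiv 0$ and $\bar\nabla h\neq 0$, a Cartan-type integration of the Codazzi system yields a constant vector $c\in\mathbb E^4_1$ and a constant $r$ such that $\langle x-c,x-c\rangle$ is $+r^{-2}$, $-r^{-2}$, or $0$; according to the sign, $M$ sits inside $\mathbb S^3_1(r^2)$, $\mathbb H^3(-r^2)$, or the light cone $\mathcal{LC}$, giving (v), (vi), (iv) respectively. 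Finally, the converse---that each of the six families is marginally trapped with parallel $H$---is checked by computing $H$ and $DH$ directly from the given parametrizations.

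The main obstacle is the last subcase, $\beta\not\equiv 0$ and $\bar\nabla h\neq 0$: one must extract from the Codazzi and Gauss systems a constant ``center'' vector whose squared pseudo-distance from $x$ is constant, and then recognize $M$ as sitting in a totally umbilical hypersurface. This rigidity step, together with the trichotomy among the three totally umbilical ambient spaces based on the sign of $\langle x-c,x-c\rangle$, is what carries the bulk of the proof in \cite{ChenVeken2009Houston}.
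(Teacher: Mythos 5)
First, note that Theorem \ref{TheoremChenVeken2009Houston} is not proved in this paper at all: it is quoted from \cite{ChenVeken2009Houston} and used as an external classification tool, so there is no in-paper argument to compare your proposal against; the only meaningful standard is the original proof of Chen and van der Veken.

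Measured against that standard, what you wrote is a roadmap rather than a proof, and the missing part is exactly where the content of the theorem lies. Your preparatory steps are correct (and can even be tightened: once $DH=0$ and $R^D=0$, the null normal field $\xi_2$ normalized by $\langle\xi_1,\xi_2\rangle=-1$ is automatically parallel, because $\langle\xi_2,D_X\xi_2\rangle=0$ forces $D_X\xi_2$ to be proportional to $\xi_2$ and $0=X\langle\xi_1,\xi_2\rangle$ then kills the factor, so no null rotation is needed), and the identities $\mathrm{tr}\,\alpha=2$, $\mathrm{tr}\,\beta=0$ together with simultaneous diagonalization via Lemma \ref{MinkYuzOrtEgrVektPariseNormKnDuzver1}(b) are fine. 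But every statement that actually separates the six families is asserted, not derived: that $\beta\not\equiv0$ with $\bar\nabla h=0$ forces flatness and congruence to \eqref{TheoremChenVeken2009_Case1} (note $K=0$ does not follow from \eqref{Gauss-curvature} alone unless you first know $\|h\|^2=0$); that in the subcase $\beta\equiv0$, $\bar\nabla h\neq0$ the surface is biharmonic and cannot lie in the light cone; and, above all, the ``Cartan-type integration'' in the generic subcase that is supposed to produce a constant vector $c$ and a constant $r$ with $\langle x-c,x-c\rangle$ equal to $r^{-2}$, $-r^{-2}$ or $0$, giving the reduction to $\mathbb S^3_1(r^2)$, $\mathbb H^3(-r^2)$ or $\mathcal{LC}$. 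That reduction \emph{is} the theorem, and you concede that it ``is what carries the bulk of the proof in \cite{ChenVeken2009Houston}'' --- which amounts to citing the result you were asked to prove. The converse direction is likewise only announced. To close the gap you would have to carry out the Codazzi analysis in the adapted null frame far enough to exhibit the constant ``center'' (or the constant null direction) explicitly in each subcase, verify the biharmonicity and flatness claims, and perform the direct computations of $H$ and $DH$ for the six parametrizations.
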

\begin{Remark}\cite{ChenJmathPhys2009}\label{RemarkChenJmathPhys2009}
We can combine cases (i) and (iii) of Theorem \ref{TheoremChenVeken2009Houston} 
into a single case, namely, flat surfaces defined by \eqref{MinkMargTrapnottooComplete} 
such that $\phi$ is a function satisfying $\Delta \phi=c$ for some real
number $c\neq 0$.
\end{Remark}
The surfaces type (i) and (ii) in Theorem \ref{TheoremChenVeken2009Houston}  
are two explicit  examples for Theorem \ref{MinkNonMaksHarmonikTheo}.
In the next theorem  we determine flat surfaces in $\mathbb S^3_1(r^2)\subset\mathbb E^4_1$ 
with parallel and light-like mean curvature vector in $\mathbb E^4_1$.
\begin{theorem}\label{LemmaAboutTheoremChenVeken2009Houston-1}
Let $M$ be a space-like surface in the de Sitter space $\mathbb S^3_1(r^2)\subset\mathbb E^4_1$ for some $r>0$. 
If $M$ is a flat surface with parallel and light-like mean curvature vector 
in $\mathbb E^4_1$, then $M$ is congruent to the surface given by
\begin{eqnarray}\label{MinkFlatMargTrapS31}
x(u,v) =  \left(\frac{r}{2}(u^2+v^2), u, v, \frac{r}{2}(u^2+v^2)-\frac 1r \right).
\end{eqnarray}
\end{theorem}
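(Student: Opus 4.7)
The plan is to turn the geometric hypotheses into rigid algebraic data about the shape operators, show $M$ is actually totally umbilical in $\mathbb E^4_1$ with constant light-like mean curvature vector, and then integrate the Gauss formula explicitly.

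First, I would set up a convenient orthonormal frame. Take $e_3 = rx$, the outward unit (space-like) normal of $\mathbb S^3_1(r^2)$ in $\mathbb E^4_1$, so $\varepsilon_3 = 1$, and let $e_4$ be the remaining (time-like, $\varepsilon_4 = -1$) unit normal to $M$. A direct computation from $\widetilde\nabla_X(rx) = rX$ gives $A_{e_3} = -r\,\mathrm{Id}$, hence $\mathrm{tr} A_3 = -2r$ and $\det A_3 = r^2$. From the Gauss equation $K = \varepsilon_3(\det A_3 - \det A_4)$ and $K=0$, I obtain $\det A_4 = r^2$. The condition $\langle H, H\rangle = 0$ combined with the mean curvature formula then forces $(\mathrm{tr} A_4)^2 = 4r^2$; after replacing $e_4$ by $-e_4$ if necessary, I may take $\mathrm{tr} A_4 = 2r$. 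Since $A_4$ is symmetric on a $2$-dimensional positive-definite tangent space and has discriminant $(\mathrm{tr} A_4)^2 - 4\det A_4 = 0$, it must equal $r\,\mathrm{Id}$.

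From $A_3 = -r\,\mathrm{Id}$ and $A_4 = r\,\mathrm{Id}$, the second fundamental form collapses to $h(X,Y) = \langle X,Y\rangle H$ with $H = -r(e_3 + e_4)$, so $M$ is totally umbilical in $\mathbb E^4_1$. I also observe that $A_H = \varepsilon_3\,\mathrm{tr}A_3 (-r\,\mathrm{Id}) + \varepsilon_4\,\mathrm{tr}A_4 (r\,\mathrm{Id}) = 0$ together with $DH = 0$ (the parallelism hypothesis) gives $\widetilde\nabla_X H = -A_H X + D_X H = 0$, so $H$ is a constant vector of $\mathbb E^4_1$.

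Next, since $M$ is flat, I introduce local coordinates $(u,v)$ with $e_1 = \partial_u$, $e_2 = \partial_v$ and $\omega_{12}\equiv 0$. The Gauss formula then becomes
\begin{equation}
x_{uu} = H,\qquad x_{uv} = 0,\qquad x_{vv} = H,\notag
\end{equation}
and two integrations yield
\begin{equation}
x(u,v) = \tfrac{u^2+v^2}{2}\,H + u\eta_1 + v\eta_2 + \eta_0,\notag
\end{equation}
for constant vectors $\eta_0,\eta_1,\eta_2 \in \mathbb E^4_1$. Matching $\langle x_u,x_u\rangle=\langle x_v,x_v\rangle=1$ and $\langle x_u,x_v\rangle=0$ forces $\eta_1,\eta_2$ to be orthonormal space-like vectors orthogonal to $H$; matching $\langle x,x\rangle \equiv r^{-2}$ gives $\langle H,\eta_0\rangle = -1$, $\langle \eta_i,\eta_0\rangle = 0$, and $\langle \eta_0,\eta_0\rangle = r^{-2}$.

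Finally, these relations depend only on inner products, so applying a rigid motion of $\mathbb E^4_1$ I may normalize $H = (r,0,0,r)$, $\eta_1 = (0,1,0,0)$, $\eta_2 = (0,0,1,0)$, and $\eta_0 = (0,0,0,-1/r)$, which substituted into the above formula yields precisely \eqref{MinkFlatMargTrapS31}. The only genuinely delicate step is the algebraic rigidity in the first paragraph---combining $\det A_4 = r^2$, $\mathrm{tr}A_4 = 2r$, and symmetry on a Euclidean $2$-plane to conclude $A_4 = r\,\mathrm{Id}$---because everything afterwards reduces to routine ODE integration and a choice of Lorentz frame.
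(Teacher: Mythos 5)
Your proposal is correct, but it follows a genuinely different route from the paper. The paper's proof stops the frame analysis at $A_3=A_4$, concludes that $M$ is pseudo-umbilical, and then outsources the rest: it invokes Chen's theorem (Arch.\ Math.\ 1994) to get biharmonicity, extracts the form $x=(\phi,u,v,\phi-\phi_0)$ from the proof of Chen--Ishikawa's Theorem 6.1, and finally pins down $\phi$ and $\phi_0$ from $\langle x,x\rangle=r^{-2}$. You instead push the algebra one step further to full umbilicity ($A_3=-r\,\mathrm{Id}$, $A_4=\pm r\,\mathrm{Id}$, hence $h(X,Y)=\langle X,Y\rangle H$), deduce that $H$ is a constant vector from $A_H=0$ together with $DH=0$, and then integrate $x_{uu}=x_{vv}=H$, $x_{uv}=0$ in flat coordinates, finishing with a Gram-matrix normalization by a linear isometry --- exactly the technique the paper itself uses for the maximal case in Proposition 3.6. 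Your version is self-contained (no appeal to biharmonic classification results) and makes visible where each hypothesis enters, at the cost of being slightly longer; the paper's is shorter but rests on two cited results. Two cosmetic remarks: your displayed formula for $A_H$ omits the factor $\tfrac12$ coming from $H=\tfrac12\sum_\beta\varepsilon_\beta(\mathrm{tr}A_\beta)e_\beta$ (harmless, since total umbilicity with $\langle H,H\rangle=0$ gives $A_H=0$ at once), and the sign normalization $\mathrm{tr}A_4=2r$ should be justified by continuity on a connected patch --- or avoided altogether, since $A_4=\pm r\,\mathrm{Id}$ pointwise already yields $h(X,Y)=\langle X,Y\rangle H$ because $h$ and $H$ are frame-independent. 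In the final step it is worth noting that the Gram matrix of $(H,\eta_1,\eta_2,\eta_0)$ has nonzero determinant, so these vectors form a basis and the inner-product-matching linear map is indeed an isometry of $\mathbb E^4_1$.
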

\begin{proof}
Suppose that   $M$ is a flat  space-like surface in  
$ \mathbb S^3_1(r^2)\subset\mathbb E^4_1$ with parallel and light-like mean curvature vector $H$
in $\mathbb E^4_1$. 
Since $M$ is flat,  there exist  local coordinates  $u$ and $v$ on $M$ 
such that the induced metric tensor is $g=du^2+dv^2$. 
Let $x:\Omega\rightarrow M\subset\mathbb S^3_1(r^2)\subset\mathbb E^4_1$ 
be an isometric immersion, where $\Omega$ is an open set in $\mathbb R^2$.  
Then, we have $\langle x,x\rangle=r^{-2}$. Thus, a local frame field 
$\{e_1,e_2, e_3, e_4\}$ on $M$ can be chosen as  $e_1=\partial_u$, $e_2=\partial_v$, 
$e_3= r x $, and  $e_4$ is a unit normal vector field orthogonal to $e_3$ such that 
$H=-r(e_3-e_4)$  as $H$ is light-like (see \cite[Lemma 2.2]{ChenCentEur2009}).

From  the Weingarten formula \eqref{MEtomWeingarten}, we have  
$\widetilde\nabla_{\partial_u}e_3=r\partial_u$ and $\widetilde\nabla_{\partial_v}e_3=r\partial_v$ 
which imply $A_3=-rI$, where $I$ is identity operator acting on tangent bundle of $M$. 
Moreover, since $M$ is flat and $H=-r(e_3-e_4)$, we have 
$\det A_3=\det A_4$, $\mathrm{tr} A_3=\mathrm{tr} A_4$ from which  and $A_3=-rI$ we obtain $A_3=A_4$.  Thus, $M$ is pseudo-umbilical. \cite[Theorem 4]{ChenArchMat1994} implies $M$ is biharmonic. 

As $M$ is a biharmonic surface with light-like mean curvature vector, from the proof of \cite[Theorem 6.1]{Chen-Ishikawa}, one can see that $x$ is of the form
\begin{equation}
\label{xphiphic} x(u,v)=(\phi(u,v),u,v,\phi(u,v)-\phi_0)
\end{equation}
for a smooth function $\phi$ and constant $\phi_0\neq0$. As $\langle x,x\rangle=r^{-2}$, from \eqref{xphiphic} we obtain
$$-2\phi_0\phi+\phi_0^2+u^2+v^2=r^{-2}.$$
By considering this equation and a linear isometry of $\mathbb E^4_1$, we may assume that
\begin{equation}
\label{xphiphic2}\phi(u,v)=\frac{r}{2}(u^2+v^2)\quad\mbox{and}\quad \phi_0=\frac 1r
\end{equation}
from which and \eqref{xphiphic} we obtain \eqref{MinkFlatMargTrapS31}.
\end{proof}
Similarly we state 
\begin{theorem}\label{LemmaAboutTheoremChenVeken2009Houston-2}
Let $M$ be a space-like surface in the hyperbolic space $\mathbb H^3(-r^2)\subset\mathbb E^4_1$ for some $r>0$. 
If  $M$ is a flat surface with parallel and light-like mean curvature vector 
in $\mathbb E^4_1$, then   $M$ is congruent to the surface given by
\begin{eqnarray}\label{MinkFlatMargTrapH3}
x(u,v) = \left(\frac 1r + \frac{r}{2}(u^2+v^2), u, v, \frac{r}{2}(u^2+v^2) \right).
\end{eqnarray}
\end{theorem}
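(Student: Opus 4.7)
The plan is to mirror the proof of Theorem \ref{LemmaAboutTheoremChenVeken2009Houston-1}, adjusting the sign conventions to the hyperbolic ambient. First, because $M$ is flat I introduce local coordinates $(u,v)\in\Omega$ in which $g=du^2+dv^2$, so that $e_1=\partial_u$, $e_2=\partial_v$ is a tangent orthonormal frame with $\omega_{12}\equiv 0$. Since $M\subset\mathbb H^3(-r^2)$ the position vector $x$ satisfies $\langle x,x\rangle=-r^{-2}$, so $x$ is time-like in $\mathbb E^4_1$. I therefore choose a normal frame $\{e_3,e_4\}$ along $M$ with $e_4=rx$ (so $\varepsilon_4=-1$) and $e_3$ a unit space-like normal to $M$ orthogonal to $e_4$ ($\varepsilon_3=+1$). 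A direct application of the Weingarten formula \eqref{MEtomWeingarten} to $e_4=rx$ gives $\widetilde\nabla_{\partial_u}e_4=r\,\partial_u$ and $\widetilde\nabla_{\partial_v}e_4=r\,\partial_v$, hence $A_4=-r\,I$.

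Next, I use flatness together with the null condition on $H$ to pin down $A_3$. From the Gauss equation and $K=0$ we get $\det A_3=\det A_4=r^2$. Writing $H=\tfrac12(\mathrm{tr}A_3\,e_3-\mathrm{tr}A_4\,e_4)=\tfrac12\mathrm{tr}A_3\,e_3+r\,e_4$ and imposing $\langle H,H\rangle=0$ forces $\mathrm{tr}A_3=\pm 2r$; reorienting $e_3$ if necessary, we may take $\mathrm{tr}A_3=2r$, and since the eigenvalues of $A_3$ must sum to $2r$ with product $r^2$, this gives $A_3=r\,I$. Consequently $A_H=rA_3+rA_4=0$, so $M$ is pseudo-umbilical, whence \cite[Theorem 4]{ChenArchMat1994} implies that $M$ is biharmonic.

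Once biharmonicity is established, the argument used in the proof of Theorem \ref{LemmaAboutTheoremChenVeken2009Houston-1}, taken from \cite[Theorem 6.1]{Chen-Ishikawa}, shows that $x$ lies in a degenerate hyperplane and may be written
$$
x(u,v)=\bigl(\phi(u,v),\,u,\,v,\,\phi(u,v)-\phi_0\bigr)
$$
for a smooth function $\phi$ and a constant $\phi_0\neq 0$. Substituting into $\langle x,x\rangle=-r^{-2}$ yields the algebraic identity $u^2+v^2-2\phi_0\phi+\phi_0^2=-r^{-2}$, which, after applying a suitable linear isometry of $\mathbb E^4_1$, admits the normalization $\phi_0=1/r$ and $\phi(u,v)=\tfrac{1}{r}+\tfrac{r}{2}(u^2+v^2)$. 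Substituting back reproduces precisely \eqref{MinkFlatMargTrapH3}.

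The main difficulty is purely one of sign bookkeeping. The hyperbolic constraint $\langle x,x\rangle<0$ interchanges which of $e_3$, $e_4$ is space-like versus time-like compared to the de Sitter setting, and this propagates through the splitting of $H$ into its normal components. The step requiring explicit verification is that the simultaneous diagonalization of $A_3$ and $A_4$ inherited from the flat normal bundle (Lemma \ref{MinkYuzOrtEgrVektPariseNormKnDuz}), combined with the $\pm 2r$ ambiguity in $\mathrm{tr}A_3$, still produces the clean conclusion $A_3=r\,I$, $A_4=-r\,I$ after the orientation of $e_3$ is fixed, so that the pseudo-umbilical argument can be applied without modification.
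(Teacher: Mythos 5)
Your proposal is correct and follows essentially the same route the paper intends: the paper proves the de Sitter case (Theorem \ref{LemmaAboutTheoremChenVeken2009Houston-1}) and simply states that this hyperbolic case is proved similarly, which is exactly what you carry out, with the correct sign adjustments ($e_4=rx$ time-like, $A_4=-rI$, $A_3=rI$ after reorientation, pseudo-umbilicity, then biharmonicity via \cite{ChenArchMat1994} and the form $x=(\phi,u,v,\phi-\phi_0)$ from \cite{Chen-Ishikawa}, normalized by $\langle x,x\rangle=-r^{-2}$). The only superfluous point is your appeal to flat normal bundle for simultaneous diagonalization: since $A_3$ is symmetric with respect to the positive-definite induced metric and its trace and determinant force both eigenvalues to equal $r$, the conclusion $A_3=rI$ needs no such input.
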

The proof of this theorem is similar to the proof of Theorem \ref{LemmaAboutTheoremChenVeken2009Houston-1}.
\begin{Corol}
 Up to linear isometries in $\mathbb E^4_1$, the surface given by \eqref{MinkFlatMargTrapS31} 
(resp., \eqref{MinkFlatMargTrapH3}) is the only surface  in $\mathbb S^3_1(r^2)\subset\mathbb E^4_1$ 
(resp., $\mathbb H^3(r^2)\subset\mathbb E^4_1$) with harmonic  Gauss map.
\end{Corol}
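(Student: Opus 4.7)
The plan is to combine the classification in Theorem \ref{MinkNonMaksHarmonikTheo} with Theorems \ref{LemmaAboutTheoremChenVeken2009Houston-1} and \ref{LemmaAboutTheoremChenVeken2009Houston-2}. Let $M$ be a space-like surface in $\mathbb S^3_1(r^2)\subset\mathbb E^4_1$ whose Gauss map is harmonic.

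First I would verify that $M$ is non-maximal in $\mathbb E^4_1$. This can be seen directly from Theorem \ref{PropMax3kosulTheorem}: any oriented maximal surface in $\mathbb E^4_1$ with harmonic Gauss map is either an open part of a space-like plane or congruent to a surface of the form \eqref{MinkMargTrapnottooComplete}. A space-like plane cannot lie in $\mathbb S^3_1(r^2)$ since $\langle x,x\rangle$ is not constantly equal to $r^{-2}$ along any affine plane through $\mathbb E^4_1$; likewise, the surface \eqref{MinkMargTrapnottooComplete} satisfies $\langle x,x\rangle = u^2+v^2$, which is not constant on any open set of parameters. Alternatively, decomposing the mean curvature vector of $M$ in $\mathbb E^4_1$ as $H = H' - r^{2} x$, where $H'$ is the mean curvature vector of $M$ inside $\mathbb S^3_1(r^2)$ and $x$ is the position vector (a space-like normal to $\mathbb S^3_1(r^2)$), the orthogonality $\langle H', x\rangle = 0$ shows that the two summands cannot cancel, so $H\not\equiv0$.

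Once $M$ is known to be non-maximal, Theorem \ref{MinkNonMaksHarmonikTheo} applies and forces $M$ to be flat in $\mathbb E^4_1$ with parallel, light-like mean curvature vector. At this point Theorem \ref{LemmaAboutTheoremChenVeken2009Houston-1} directly identifies $M$, up to a linear isometry of $\mathbb E^4_1$, with the explicit surface \eqref{MinkFlatMargTrapS31}. The argument for $M\subset\mathbb H^3(-r^2)$ is identical, with the position vector now time-like: one rules out the maximal case exactly as above, applies Theorem \ref{MinkNonMaksHarmonikTheo}, and then invokes Theorem \ref{LemmaAboutTheoremChenVeken2009Houston-2} to obtain \eqref{MinkFlatMargTrapH3}.

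The converse is immediate: the surfaces \eqref{MinkFlatMargTrapS31} and \eqref{MinkFlatMargTrapH3} are, by construction in Theorems \ref{LemmaAboutTheoremChenVeken2009Houston-1} and \ref{LemmaAboutTheoremChenVeken2009Houston-2}, flat with parallel and light-like mean curvature vector, hence by Theorem \ref{MinkNonMaksHarmonikTheo} have harmonic Gauss map. There is no substantive obstacle in this proof; the only point that needs care is confirming that neither branch of the maximal classification in Theorem \ref{PropMax3kosulTheorem} is compatible with being contained in $\mathbb S^3_1(r^2)$ or $\mathbb H^3(-r^2)$, which reduces to the elementary observation about $\langle x,x\rangle$ recorded above.
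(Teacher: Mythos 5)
Your proposal is correct and follows essentially the route the paper intends: the corollary is an immediate consequence of ruling out the maximal case and then combining Theorem \ref{MinkNonMaksHarmonikTheo} with Theorems \ref{LemmaAboutTheoremChenVeken2009Houston-1} and \ref{LemmaAboutTheoremChenVeken2009Houston-2}, with the converse supplied by the same results. Your second argument for non-maximality (decomposing $H=H'-\varepsilon r^2x$ with $\langle H',x\rangle=0$) is the clean one to rely on, since the first check via $\langle x,x\rangle=u^2+v^2$ is stated only for the surface in standard position and would need to be redone after an arbitrary congruence.
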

By combining the results given in this section, we state
\begin{theorem}\label{ClassificationTheorem1typenonHarmo}
Let $M$ be an oriented space-like surface in the Minkowski space $\mathbb {E}^4_1$. 
Then the Gauss map $\nu$ of $M$ is harmonic  if and only if $M$ is congruent 
to one of the following six types of surfaces:
\begin{enumerate}
\item[(i)] an open part of a space-like plane;
\item[(ii)] the flat surface given by \eqref{MinkMargTrapnottooComplete} 
for a smooth function $\phi:\Omega\rightarrow \mathbb R$ satisfying $\Delta\phi=c$, 
where $\Omega$ is an open set in $\mathbb R^2$ and $c\in\mathbb R$;
\item[(iii)] the flat surface given by \eqref{TheoremChenVeken2009_Case1};
\item[(iv)] a non-parallel flat surface lying in the light cone $\mathcal{LC}$;
\item[(v)] the flat surface given by \eqref{MinkFlatMargTrapS31} 
lying in the de-Sitter space-time $\mathbb S^3_1(r^2)$;
\item[(vi)] the flat surface given by \eqref{MinkFlatMargTrapH3} 
lying in the hyperbolic space $\mathbb H^3(-r^2)$.
\end{enumerate}
\end{theorem}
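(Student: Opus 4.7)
The plan is to split into the maximal and non-maximal cases and apply the structural theorems already proved, then merge the resulting lists.

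First I would dispose of the maximal case. If $M$ is maximal ($H\equiv 0$) with harmonic Gauss map, then Theorem \ref{PropMax3kosulTheorem} says $M$ is either an open part of a space-like plane (which is case (i)) or a surface of the form \eqref{MinkMargTrapnottooComplete} for a harmonic $\phi$; the latter is exactly case (ii) with $c=0$.

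Next I would handle the non-maximal case. By Theorem \ref{MinkNonMaksHarmonikTheo}, a non-maximal oriented space-like surface with harmonic Gauss map is flat, has light-like mean curvature vector, and has $DH=0$. In particular $M$ is a marginally trapped surface with parallel mean curvature vector, so Theorem \ref{TheoremChenVeken2009Houston} applies and $M$ is an open part of one of the six types (i)--(vi) listed there. Since our additional constraint is flatness, I would then traverse the Chen--Van der Veken list and record only the flat surfaces in each type: cases (i) and (iii) are already flat, and by Remark \ref{RemarkChenJmathPhys2009} they amalgamate into the single family \eqref{MinkMargTrapnottooComplete} with $\Delta\phi=c\neq 0$, thereby completing case (ii) of the target theorem (together with the $c=0$ piece coming from the maximal analysis). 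Chen--Van der Veken's case (ii) is the surface \eqref{TheoremChenVeken2009_Case1}, giving case (iii); case (iv) of Chen--Van der Veken (non-parallel flat surface in the light cone) becomes case (iv). For cases (v) and (vi) of Chen--Van der Veken, I would impose flatness and invoke Theorems \ref{LemmaAboutTheoremChenVeken2009Houston-1} and \ref{LemmaAboutTheoremChenVeken2009Houston-2} to conclude that $M$ is congruent respectively to \eqref{MinkFlatMargTrapS31} and \eqref{MinkFlatMargTrapH3}, producing cases (v) and (vi).

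For the converse, one checks each of the six surfaces directly: planes are totally geodesic so their Gauss maps are constant and harmonic; the surfaces in (ii) are harmonic Gauss map examples by Proposition \ref{PropMax3kosuldenk} when $c=0$ and by the sufficiency part of Theorem \ref{MinkNonMaksHarmonikTheo} when $c\neq 0$ (the mean curvature vector computed in \eqref{MinkMargTrapnottooCompleteH} is light-like and parallel precisely because $\Delta\phi$ is a non-zero constant); the surfaces (iii)--(vi) are flat, marginally trapped, and have parallel mean curvature vector (using the converse direction of Theorem \ref{TheoremChenVeken2009Houston} for (iii) and (iv), and the constructions of Theorems \ref{LemmaAboutTheoremChenVeken2009Houston-1} and \ref{LemmaAboutTheoremChenVeken2009Houston-2} for (v) and (vi)), so the sufficiency part of Theorem \ref{MinkNonMaksHarmonikTheo} again gives $\Delta\nu=0$.

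The main obstacle is essentially bookkeeping: one has to confirm that flatness genuinely singles out a \emph{single} family inside each of Chen--Van der Veken's cases (v) and (vi)---which is why Theorems \ref{LemmaAboutTheoremChenVeken2009Houston-1} and \ref{LemmaAboutTheoremChenVeken2009Houston-2} were proved in advance---and that the amalgamation of Chen--Van der Veken's (i) and (iii) via Remark \ref{RemarkChenJmathPhys2009} agrees with the maximal case's $c=0$ family, so that the list (i)--(vi) in the target theorem is both exhaustive and non-redundant.
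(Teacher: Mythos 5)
Your proposal is correct and is essentially the paper's own argument: the paper states this theorem as a direct combination of Theorem \ref{PropMax3kosulTheorem}, Theorem \ref{MinkNonMaksHarmonikTheo}, Theorem \ref{TheoremChenVeken2009Houston} filtered by flatness, Remark \ref{RemarkChenJmathPhys2009}, and Theorems \ref{LemmaAboutTheoremChenVeken2009Houston-1} and \ref{LemmaAboutTheoremChenVeken2009Houston-2}, which is exactly the maximal/non-maximal split and bookkeeping you carry out, including the merging of the $c=0$ and $c\neq 0$ families into case (ii) and the converse verification via the sufficiency parts of those results.
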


\section{Space-like  surfaces in $\mathbb E^4_1$ with pointwise 1-type 
Gauss map of the first kind} \label{SectionPW1Type}

Let $M$ be an oriented space-like surface in the Minkowski space $\mathbb E^4_1$ 
with harmonic Gauss map $\nu$. 
Then $\nu$ satisfies \eqref{PW1TypeDefinition} for $f=0$ and $C=0$. 
Thus, a harmonic Gauss map $\nu$ is of pointwise 1-type of the first kind. 
In this section, we obtain a  characterization of 
surfaces  in  $\mathbb E^4_1$ with pointwise 1-type Gauss map of the first kind.

\begin{theorem}\label{MinMinimal1Cesit1tip}
Let $M$ be an oriented  maximal surface in the Minkowski space $\mathbb E^4_1$. 
Then $M$ has pointwise 1-type Gauss map of the first kind if and only if 
$M$ has  flat normal bundle. Moreover, the Gauss map $\nu$ satisfies  
\eqref{PW1TypeDefinition} for  $f=\|h\|^2$ and $C=0$.
\end{theorem}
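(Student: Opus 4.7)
The plan is to specialize Lemma \ref{MinkGaussLapLem} to the maximal case and then read off the characterization from a linear independence argument in $\bigwedge^2\mathbb E^4_1$.

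First I would fix an adapted orthonormal frame $\{e_1,e_2,e_3,e_4\}$ on $M$ with $e_1,e_2$ tangent and $e_3,e_4$ normal (with $\varepsilon_3=-\varepsilon_4=1$), so that $\nu=e_3\wedge e_4$. Since $M$ is maximal, $H\equiv 0$, hence $\mathrm{tr}\,A_3=\mathrm{tr}\,A_4=0$ identically, so $\nabla(\mathrm{tr}\,A_3)=\nabla(\mathrm{tr}\,A_4)=0$, and the last term of \eqref{MinkGaussLaplEnSon} also vanishes. What remains of the general Laplacian formula collapses to
\begin{equation*}
\Delta\nu=\|h\|^2\,\nu+2\,R^D(e_1,e_2;e_3,e_4)\,e_1\wedge e_2,
\end{equation*}
exactly as already derived in the proof of Proposition \ref{PropMax3kosuldenk}.

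Next I would use the fact that $\bigwedge^2\mathbb E^4_1$ is $6$-dimensional and that, with respect to the orthonormal basis $\{e_i\wedge e_j\}_{i<j}$ coming from our adapted frame, the bivectors $e_3\wedge e_4=\nu$ and $e_1\wedge e_2$ are distinct basis vectors, hence linearly independent (in fact orthogonal with respect to the inner product \eqref{inner-prod}). By Remark \ref{MinRemark1}, $M$ having pointwise 1-type Gauss map of the first kind means $\Delta\nu=f\nu$ for some smooth $f$ on $M$. Equating the two expressions and using linear independence, this occurs if and only if the coefficient of $e_1\wedge e_2$ vanishes, i.e.
\begin{equation*}
R^D(e_1,e_2;e_3,e_4)=0,
\end{equation*}
and in that case necessarily $f=\|h\|^2$. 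Since the normal bundle has rank $2$, this single component controls the whole normal curvature tensor, so the vanishing condition is equivalent to $R^D\equiv 0$, that is, to $M$ having flat normal bundle.

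There is no real obstacle here: all the analytic content is already packaged into Lemma \ref{MinkGaussLapLem}, and the argument reduces to a one-line linear algebra observation in $\bigwedge^2\mathbb E^4_1$ together with the maximality hypotheses $H=0$ and $\mathrm{tr}\,A_\beta=0$. The only point deserving a brief remark is the identification of ``$R^D(e_1,e_2;e_3,e_4)=0$ for one frame'' with ``flat normal bundle'', which is immediate from the rank-two nature of the normal bundle over a surface. The ``moreover'' statement $f=\|h\|^2$ and $C=0$ then drops out of the same computation.
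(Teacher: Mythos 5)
Your argument is correct and is essentially the paper's own proof: the paper also invokes the specialization of Lemma \ref{MinkGaussLapLem} to the maximal case (equation \eqref{MaksYuzeyE41GausLapl}, derived in Proposition \ref{PropMax3kosuldenk}) and concludes the equivalence with $R^D=0$ from the same decomposition, with $f=\|h\|^2$ as noted in Remark \ref{MinRemark1}. Your only addition is spelling out the linear independence of $\nu=e_3\wedge e_4$ and $e_1\wedge e_2$ in $\bigwedge^2\mathbb E^4_1$, which the paper leaves implicit.
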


\begin{proof} 
If $M$ is maximal, then the  Gauss map $\nu$   satisfies \eqref{MaksYuzeyE41GausLapl}. 
Hence, $\nu$ is of  pointwise 1-type of the first kind if and only if $R^D=0$. 
\end{proof}

We now give the following lemma.
\begin{Lemma}\label{Lemma1MinkMaksi}
Let $M$ be an oriented maximal surface in the Minkowski space $\mathbb E^4_1$. 
If $M$ has pointwise 1-type Gauss map of the first kind, then the function $f=\|h\|^2$ 
satisfies
\begin{eqnarray}
\label{MinkMaksYuz1CesK1}e_1(f)&=&-4\varepsilon\omega_{12}(e_2)f,\\
\label{MinkMaksYuz1CesK2}e_2(f)&=&4\varepsilon\omega_{12}(e_1)f,
\end{eqnarray}
where $\{e_1,e_2\}$ is a local orthonormal frame for tangent bundle of $M$
and $\varepsilon\in\{-1,1\}$.
\end{Lemma}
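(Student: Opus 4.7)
The plan is to first use the pointwise 1-type first-kind hypothesis to rigidly constrain the shape operators of $M$, and then to compute $e_i(f)$ in a carefully chosen frame.

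Because $M$ is maximal and has pointwise 1-type Gauss map of the first kind, Theorem~\ref{MinMinimal1Cesit1tip} gives $R^D\equiv 0$, so I can locally pick a parallel orthonormal normal frame $\{e_3,e_4\}$ with $\omega_{34}\equiv 0$. Maximality forces $\mathrm{tr}\,A_3=\mathrm{tr}\,A_4=0$, so $A_3$ and $A_4$ are $2\times 2$ symmetric, trace-free operators on $TM$. The Ricci equation \eqref{MinkRicciEquation} combined with $R^D=0$ yields $[A_3,A_4]=0$, but two commuting symmetric trace-free $2\times 2$ matrices are necessarily proportional. Hence on the open set $U=\{A_3\neq 0\}$ there is a smooth function $\lambda$ with $A_4=\lambda A_3$. (On the complement one argues symmetrically with the roles of $A_3$ and $A_4$ interchanged, or else $f\equiv 0$ and the claim is trivial.)

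The crucial technical step is to show that $\lambda$ is locally constant on $U$. I would write out the Codazzi equations \eqref{MinkCodazziversion2} for $\beta=4$, substitute $h^4_{ij}=\lambda\,h^3_{ij}$, and subtract $\lambda$ times the corresponding equations for $\beta=3$. The connection-form terms cancel exactly, leaving the homogeneous linear system
\[
e_2(\lambda)\,h^3_{11} - e_1(\lambda)\,h^3_{12} = 0, \qquad e_1(\lambda)\,h^3_{11} + e_2(\lambda)\,h^3_{12} = 0,
\]
in $(e_1(\lambda),e_2(\lambda))$, whose determinant is $-\bigl((h^3_{11})^2+(h^3_{12})^2\bigr)$ and is non-zero throughout $U$. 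Therefore $e_1(\lambda)=e_2(\lambda)=0$ and $\lambda$ is a local constant.

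With $\lambda$ constant I diagonalise $A_3$ by choosing $\{e_1,e_2\}$ to be its eigenframe; this is smoothly possible on $U$ because $A_3$ has distinct eigenvalues $\pm r$, where $r:=\sqrt{(h^3_{11})^2+(h^3_{12})^2}>0$. In this frame $h^3_{11}=-h^3_{22}=r$, $h^3_{12}=0$, and the Codazzi equations \eqref{MinkCodazziversion2} collapse to
\[
e_1(r) = -2\,\omega_{12}(e_2)\,r, \qquad e_2(r) = 2\,\omega_{12}(e_1)\,r.
\]
From \eqref{Mink2esFormUzTanim} one has $f=\|h\|^2=2(\varepsilon_3+\varepsilon_4\lambda^2)\,r^2$ with constant prefactor, so differentiating and substituting the two displays above immediately yields \eqref{MinkMaksYuz1CesK1}--\eqref{MinkMaksYuz1CesK2}; the sign $\varepsilon\in\{-1,1\}$ reflects the orientation of the eigenframe. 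The main obstacle is the middle step: one must pair the Codazzi equations for $\beta=3$ and $\beta=4$ precisely so that all $\omega_{12}$ contributions cancel and the clean linear system in $(e_1(\lambda),e_2(\lambda))$ emerges. Once this constancy of $\lambda$ is in hand, the remainder is a direct computation in the eigenframe of $A_3$.
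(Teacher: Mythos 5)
Your argument is correct, and it reaches the two identities by a slightly different organization than the paper. The paper also starts from Theorem \ref{MinMinimal1Cesit1tip} ($R^D=0$) and a tangent frame diagonalizing both shape operators (with $\mathrm{tr}A_3=\mathrm{tr}A_4=0$), but it does \emph{not} normalize the normal connection: it keeps a general normal frame, writes the four Codazzi equations \eqref{MinkMinGerekKos1D1}--\eqref{MinkMinGerekKos1D4} including the $\omega_{34}$ terms, and observes that in the weighted combination $\varepsilon_3 h^3_{11}(\cdot)+\varepsilon_4 h^4_{11}(\cdot)$ the $\omega_{34}$ contributions cancel, giving $e_i(f)$ directly from $f=2\bigl(\varepsilon_3(h^3_{11})^2+\varepsilon_4(h^4_{11})^2\bigr)$. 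You instead gauge $\omega_{34}\equiv 0$ by a parallel normal frame and then insert an extra structural step: $[A_3,A_4]=0$ with both traceless forces $A_4=\lambda A_3$ on $\{A_3\neq 0\}$, and your Codazzi subtraction trick correctly yields the nondegenerate system showing $\lambda$ is locally constant (I checked the two equations and the determinant $-\bigl((h^3_{11})^2+(h^3_{12})^2\bigr)$), after which everything reduces to the single function $r$ and the two Codazzi relations $e_1(r)=-2\omega_{12}(e_2)r$, $e_2(r)=2\omega_{12}(e_1)r$; these do give \eqref{MinkMaksYuz1CesK1}--\eqref{MinkMaksYuz1CesK2} with $\varepsilon=1$ for your frame. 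The trade-off: the paper's cancellation is shorter and needs no discussion of where $A_3$ vanishes, while your route is longer but buys more information (in a parallel normal frame the two shape operators of such a surface are proportional with locally constant ratio). Your treatment of the degenerate set $\{A_3=0\}$ (swap roles of $e_3,e_4$, or $f\equiv 0$) is at the same level of rigor as the paper, which simply assumes a smooth simultaneous eigenframe, so I do not count it as a gap.
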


\begin{proof}
Let $M$ be a maximal surface in $\mathbb E^4_1$ with  pointwise 1-type Gauss map of the first kind. 
Then  Theorem \ref{MinMinimal1Cesit1tip} implies that $M$ has flat normal bundle. 
Thus, the shape operators can be  diagonalized simultaneously, i.e., 
there exists an orthornormal frame field $\{e_1,e_2,e_3,e_4\}$ on $M$ such that 
$
A_\beta  =   \mathrm{diag}(h^\beta_{11},-h^\beta_{11}),$ $  \beta=3,4, 
$
as $H=0$. Therefore,  we have from  \eqref{Mink2esFormUzTanim} that
\begin{eqnarray}
\label{MinkMinGerekKos1ffonk} f=\|h\|^2&=&2\Big(\varepsilon_3(h^3_{11})^2+\varepsilon_4(h^4_{11})^2\Big).
\end{eqnarray}
and Codazzi equation  \eqref{MinkCodazziversion2} yields
\begin{eqnarray}
\label{MinkMinGerekKos1D1}e_1(h^3_{11})-\varepsilon_4 h^4_{11}\omega_{34}(e_1)&=&-2\omega_{12}(e_2)h^3_{11},\\
\label{MinkMinGerekKos1D2}e_1(h^4_{11})+\varepsilon_3 h^3_{11}\omega_{34}(e_1)&=&-2\omega_{12}(e_2)h^4_{11},\\
\label{MinkMinGerekKos1D3}e_2(h^3_{11})-\varepsilon_4 h^4_{11}\omega_{34}(e_2)&=&2\omega_{12}(e_1)h^3_{11},\\
\label{MinkMinGerekKos1D4}e_2(h^4_{11})+\varepsilon_3 h^3_{11}\omega_{34}(e_2)&=&2\omega_{12}(e_1)h^4_{11}.
\end{eqnarray}
By multiplying \eqref{MinkMinGerekKos1D1} and  \eqref{MinkMinGerekKos1D2}, 
respectively, $\varepsilon_3 h^3_{11}$ and  $\varepsilon_4 h^4_{11}$ 
and adding them, we obtain that
$$
\varepsilon_3h^3_{11}e_1(h^3_{11})+\varepsilon_4h^4_{11}e_1(h^4_{11})=-2\omega_{12}(e_2)\left( \varepsilon_3(h^3_{11})^2+\varepsilon_4(h^4_{11})^2\right).
$$
By using \eqref{MinkMinGerekKos1ffonk} again in this equation, 
we obtain  \eqref{MinkMaksYuz1CesK1}. In a similar way, we see that \eqref{MinkMinGerekKos1D3} 
and \eqref{MinkMinGerekKos1D4} give \eqref{MinkMaksYuz1CesK2}. 
\end{proof}


\begin{Prop} \label{PropMaxGlobalGauss}
Let $M$ be an oriented maximal surface  in the Minkowski space $\mathbb E^4_1$. 
Then $M$ has (global) 1-type Gauss map of the first kind if and only if the 
Gauss map $\nu$ of  $M$ is harmonic.
\end{Prop}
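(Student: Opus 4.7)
The plan is to reduce the global statement to the pointwise information supplied by Theorem~\ref{MinMinimal1Cesit1tip} and Lemma~\ref{Lemma1MinkMaksi}. The ``if'' direction I would handle first and it is immediate: if $\nu$ is harmonic then $\Delta\nu = 0 = 0\cdot\nu$, which by Remark~\ref{RemarkDefinition} qualifies as (global) 1-type of the first kind with constant eigenvalue $\lambda = 0$.

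For the converse, I would begin by assuming $\Delta\nu = \lambda\,\nu$ for some constant $\lambda\in\mathbb{R}$. Since this is in particular pointwise 1-type of the first kind, Theorem~\ref{MinMinimal1Cesit1tip} yields simultaneously that the normal bundle of $M$ is flat and that $\Delta\nu = \|h\|^2\,\nu$. Comparing the two formulas gives $\|h\|^2 \equiv \lambda$, so $\|h\|^2$ is constant. Maximality together with \eqref{Gauss-curvature} then reduces $K$ to $K = -\tfrac{1}{2}\|h\|^2 = -\lambda/2$, so the Gaussian curvature is also the constant $-\lambda/2$.

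It remains to rule out $\lambda \neq 0$. Suppose for contradiction that $\lambda \neq 0$. Then $f = \|h\|^2 = \lambda$ is a non-zero constant, hence $e_1(f) = e_2(f) = 0$. Feeding this into the identities of Lemma~\ref{Lemma1MinkMaksi} forces $\omega_{12}(e_1) = \omega_{12}(e_2) = 0$, so the connection $1$-form $\omega_{12}$ vanishes identically on $M$. Consequently the tangent frame $\{e_1,e_2\}$ is parallel, so $M$ is locally flat and $K \equiv 0$, contradicting $K = -\lambda/2 \neq 0$. Therefore $\lambda = 0$ and $\nu$ is harmonic.

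The hard step is already packaged into the two preparatory results: Theorem~\ref{MinMinimal1Cesit1tip} delivers the compact identity $\Delta\nu = \|h\|^2\,\nu$ (which itself rests on the Laplacian formula of Lemma~\ref{MinkGaussLapLem}), and Lemma~\ref{Lemma1MinkMaksi} converts constancy of $\|h\|^2$ into the vanishing of $\omega_{12}$. Given those inputs, the only remaining subtlety is the innocuous observation that $\omega_{12}\equiv 0$ forces $K\equiv 0$, which follows from the structure equations on a two-dimensional Riemannian manifold.
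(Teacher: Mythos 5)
Your proof is correct and follows essentially the paper's own argument: both rest on Theorem \ref{MinMinimal1Cesit1tip} (flat normal bundle and $f=\|h\|^2$) and Lemma \ref{Lemma1MinkMaksi} applied to the constant eigenvalue, yielding $\omega_{12}(e_1)\lambda=\omega_{12}(e_2)\lambda=0$, so either $\lambda=0$ or $\omega_{12}\equiv0$. The only cosmetic difference is the final step: where you close the $\omega_{12}\equiv0$ case via the structure equation ($\omega_{12}\equiv0\Rightarrow K\equiv0$, contradicting $K=-\lambda/2\neq0$), the paper notes that $M$ is then flat and invokes Proposition \ref{PropMax3kosuldenk} to conclude harmonicity.
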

\begin{proof}
We assume that $M$ has (global) 1-type Gauss map $\nu$ of the first kind. 
Then Theorem \ref{MinMinimal1Cesit1tip} implies that $M$ has flat normal bundle.   
On the other hand, since $\nu$ is (global) 1-type of the first kind, 
\eqref{PW1TypeDefinition} is satisfied for $f=f_0$, where $f_0$ is a constant. 
Moreover, Lemma \ref{Lemma1MinkMaksi} implies that $f$ satisfies 
\eqref{MinkMaksYuz1CesK1} and \eqref{MinkMaksYuz1CesK2} from which we obtain
$\omega_{12}(e_1)f_0=\omega_{12}(e_2)f_0=0$
that imply $f_0=0$ or $\omega_{12} = 0$. In the case  $f_0=0$, 
we have $\Delta\nu=f_0\nu=0$, i.e., $\nu$ is harmonic.
Otherwise $M$ is flat, and  it follows from Proposition \ref{PropMax3kosuldenk} 
that $\nu$ is harmonic.

The converse is obvious.
\end{proof}

Now we study non-maximal space-like  surfaces in $\mathbb E^4_1$ 
with pointwise 1-type Gauss map of the first kind.

\begin{theorem}\label{NonmaksClassTheo}
 Let  $M$ be an oriented non-maximal space-like surface in $\mathbb E^4_1$. 
 Then  $M$ has pointwise 1-type Gauss map of the first kind if and only if $M$
  has parallel mean curvature vector. 
\end{theorem}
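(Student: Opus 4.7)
The plan is to apply the Laplacian formula from Lemma \ref{MinkGaussLapLem} to $\nu = e_3\wedge e_4$ and read off what it means for $\Delta\nu$ to be a scalar multiple of $\nu$. Since $\dim M = 2$ and $M\subset \mathbb E^4_1$, the bivector space $\bigwedge^2\mathbb E^4_1$ has a basis consisting of $e_i\wedge e_j$ with $1\le i<j\le 4$, and these six bivectors are linearly independent. Requiring $\Delta\nu = f\nu$ forces every term in \eqref{MinkGaussLaplEnSon} other than $\|h\|^2\nu$ to vanish identically. I will split those terms into the $e_1\wedge e_2$ piece, which contains the normal curvature, and the mixed tangent–normal pieces $e_i\wedge e_\beta$ (with $i\in\{1,2\}$, $\beta\in\{3,4\}$), which contain $\nabla(\mathrm{tr}A_\beta)$ and the $\omega_{34}H\wedge e_i$ contributions.

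First I would choose a local orthonormal frame with $\varepsilon_3=1,\ \varepsilon_4=-1$ and write $H=\tfrac12(\mathrm{tr}A_3\,e_3-\mathrm{tr}A_4\,e_4)$. Setting the coefficient of $e_1\wedge e_2$ in \eqref{MinkGaussLaplEnSon} to zero gives $R^D(e_1,e_2;e_3,e_4)=0$, i.e.\ $R^D\equiv 0$. Expanding $\nabla(\mathrm{tr}A_3)\wedge e_4+e_3\wedge\nabla(\mathrm{tr}A_4)+2\sum_j\varepsilon_j\omega_{34}(e_j)H\wedge e_j$ and collecting the coefficient of $e_i\wedge e_4$ and of $e_i\wedge e_3$ separately yields the system
\begin{equation*}
e_i(\mathrm{tr}A_3)+\omega_{34}(e_i)\,\mathrm{tr}A_4=0,\qquad
e_i(\mathrm{tr}A_4)+\omega_{34}(e_i)\,\mathrm{tr}A_3=0,\qquad i=1,2.
\end{equation*}
The second key calculation is to compute $D_{e_i}H$ directly from $H=\tfrac12(\mathrm{tr}A_3\,e_3-\mathrm{tr}A_4\,e_4)$, using $D_{e_i}e_3=-\omega_{34}(e_i)e_4$ and $D_{e_i}e_4=-\omega_{34}(e_i)e_3$, and observe that $D_{e_i}H=0$ is exactly the same pair of equations. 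Hence the vanishing of the tangent–normal bivector components is equivalent to $DH=0$.

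Putting the two pieces together proves the forward direction: if $M$ has pointwise 1-type Gauss map of the first kind, then both $R^D=0$ and $DH=0$, so in particular the mean curvature vector is parallel. For the converse, assume $DH=0$ with $M$ non-maximal; Lemma \ref{MinkYuzOrtEgrVektPariseNormKnDuz} immediately supplies $R^D=0$, and the equivalence just established shows that the mixed terms in \eqref{MinkGaussLaplEnSon} also vanish, so $\Delta\nu=\|h\|^2\nu$, which by Remark \ref{MinRemark1} is precisely pointwise 1-type of the first kind (with $f=\|h\|^2$).

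The main obstacle is bookkeeping: the bivector identities mix signs coming from the indefinite signature ($\varepsilon_4=-1$), from $e_i\wedge e_\beta=-e_\beta\wedge e_i$, and from $\omega_{43}=-\omega_{34}$, and one must carefully verify that the two apparently different systems — the one produced by setting the $e_i\wedge e_\beta$ coefficients of \eqref{MinkGaussLaplEnSon} to zero, and the one produced by expanding $D_{e_i}H$ in the $\{e_3,e_4\}$ basis — coincide term by term. The non-maximality hypothesis is used only to invoke Lemma \ref{MinkYuzOrtEgrVektPariseNormKnDuz} (so that $R^D=0$ is automatic from $DH=0$); the rest of the argument is an algebraic reading of formula \eqref{MinkGaussLaplEnSon}.
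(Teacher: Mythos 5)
Your proof is correct and takes essentially the same route as the paper: both arguments read off from Lemma \ref{MinkGaussLapLem} that $\Delta\nu=f\nu$ forces $R^D=0$ together with the vanishing of the tangent--normal bivector terms, and both invoke Lemma \ref{MinkYuzOrtEgrVektPariseNormKnDuz} (via non-maximality) to get $R^D=0$ in the converse, concluding $\Delta\nu=\|h\|^2\nu$. The only cosmetic difference is that you identify the mixed terms with the components of $D_{e_i}H$ in an arbitrary orthonormal normal frame (and your sign bookkeeping with $\varepsilon_4=-1$ checks out), whereas the paper first uses $R^D=0$ to choose a parallel frame with $\omega_{34}=0$ and then concludes $\mathrm{tr}A_3$ and $\mathrm{tr}A_4$ are constant.
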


\begin{proof}
 Let  $M$ be an oriented non-maximal space-like surface in $\mathbb E^4_1$. 
 Suppose that  $M$ has pointwise 1-type 
 Gauss map of the first kind. Then \eqref{PW1TypeDefinition} is satisfied for 
 $f=\|h\|^2$ and $C=0$. From  \eqref{PW1TypeDefinition} and \eqref{MinkGaussLaplEnSon}
 we obtain that $R^D=0$ and
\begin{eqnarray}\label{ThmNonmaksClassTheoAraDenk1}
\nabla(\mathrm{tr}A_3)\wedge e_4+e_3\wedge \nabla(\mathrm{tr}A_4) + 
2\sum\limits_{j=1}^2\omega_{34} (e_j) H\wedge e_j=0.
\end{eqnarray} 
Since $R^D=0$, there exists a local orthonormal frame $\{e_3,e_4\}$ of normal bundle of $M$ 
such that $\omega_{34}= 0$. So, it follows  from \eqref{ThmNonmaksClassTheoAraDenk1} 
that  $\nabla\mathrm{tr}A_3=\nabla\mathrm{tr}A_4=0$, that is, 
$\mathrm{tr}A_{\beta} =constant, \; \beta = 3, 4,$ from which and $\omega_{34}=0$ we have  $DH=0$. 

Conversely, let $H$ be parallel. From Lemma \ref{MinkYuzOrtEgrVektPariseNormKnDuz}  
we have $R^D=0$. Thus, there exists  a local, orthonormal frame $\{e_3, e_4\}$ of normal 
bundle of $M$ such that $\omega_{34}\equiv 0$. So, it follows from $DH = 0$ that 
$\mathrm{tr}A_3$ and $\mathrm{tr}A_4$ are constants.
Therefore,  equation   \eqref{MinkGaussLaplEnSon} implies that  $\Delta\nu=\|h\|^2\nu$, 
 that is, $M$ has pointwise 1-type Gauss map of the first kind. 
\end{proof}

\begin{Example} \label{ChenVeken2009HoustonExample5.2}
Let $M$ be a surface in $\mathbb E^4_1$ given by 
\begin{align} \notag 
x(u,v)=\frac 1{ \sqrt2}&(u\cosh \sqrt2v, u\sinh \sqrt2v, \\ \notag 
 &\sqrt2\sin\sqrt2u-u\cos\sqrt2u,\sqrt2\cos\sqrt2u+u\sin\sqrt2u ).
 \end{align}
Then the mean curvature vector $H$ of $M$ is parallel  and light-like \cite{ChenVeken2009Houston}.
 Moreover, the Gaussian curvature of $M$ is $K=u^{-4}$ which implies $\|h\|^2=-2u^{-4}$ 
 from \eqref{Gauss-curvature}. Therefore, $M$ has proper 1-type Gauss map of the first 
 kind because of Theorem \ref{NonmaksClassTheo}, that is,  \eqref{PW1TypeDefinition} is
 satisfied for $C=0$ and $f=\|h\|^2=-2u^{-4}$.
\end{Example}

In \cite{ChenVeken2009Tohoku}, 
a complete classification of space-like surfaces with parallel mean curvature vector was given.
By combining \cite[Theorem 3.1]{ChenVeken2009Tohoku} and Theorem \ref{NonmaksClassTheo}, we have

\begin{theorem}\label{NonmaksClassTheo2ndversion}
Let $M$ be an oriented non-maximal space-like surface in $\mathbb E^4_1$ 
with space-like or time-like mean curvature vector. 
Then $M$ has pointwise 1-type Gauss map of the first kind if and only if  $M$ is 
a CMC surface lying in the light cone $\mathcal{LC}\subset\mathbb E^4_1$, 
a Euclidean hyperplane $\mathbb E^3\subset\mathbb E^4_1$, a Lorentzian hyperplane 
$\mathbb E^3_1\subset\mathbb E^4_1$, the de Sitter space-time 
$\mathbb S^3_1(c^2)\subset\mathbb E^4_1$, or the hyperbolic space $\mathbb H^3 (-c^2) \subset\mathbb E^4_1$.
\end{theorem}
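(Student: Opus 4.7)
The proof is essentially a concatenation of Theorem \ref{NonmaksClassTheo} (proved in this section) with the Chen--Van der Veken classification in \cite{ChenVeken2009Tohoku}. The plan is to first replace the analytic condition "pointwise 1-type Gauss map of the first kind" by the equivalent geometric condition "parallel mean curvature vector", and then invoke the ambient classification of space-like CMC surfaces with parallel, non-light-like mean curvature vector in $\mathbb E^4_1$.

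First I would apply Theorem \ref{NonmaksClassTheo}: an oriented non-maximal space-like surface $M$ in $\mathbb E^4_1$ has pointwise 1-type Gauss map of the first kind if and only if $DH\equiv 0$. Combining this with the standing hypothesis that $H$ is nowhere light-like, Lemma \ref{MinkYuzOrtEgrVektPariseNormKnDuzver1}(a) gives that $\langle H,H\rangle$ is a nonzero constant. Hence $|H|$ is constant and $M$ is automatically a CMC surface; so the CMC part of the conclusion is free once we have $DH=0$.

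For the forward implication, once the reduction to parallel mean curvature vector is made, I would quote Theorem 3.1 of \cite{ChenVeken2009Tohoku}, which classifies, up to congruence, all oriented non-maximal space-like surfaces in $\mathbb E^4_1$ with parallel, space-like or time-like mean curvature vector. That result asserts precisely that such a surface must lie in one of the five ambient submanifolds appearing in our statement: the light cone $\mathcal{LC}$, a Euclidean hyperplane $\mathbb E^3$, a Lorentzian hyperplane $\mathbb E^3_1$, a de Sitter space-time $\mathbb S^3_1(c^2)$, or a hyperbolic space $\mathbb H^3(-c^2)$.

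For the converse, I would invoke the companion statement (also in \cite{ChenVeken2009Tohoku}) that every CMC surface lying in one of these five ambient submanifolds has parallel mean curvature vector in $\mathbb E^4_1$. A second application of Theorem \ref{NonmaksClassTheo} then furnishes pointwise 1-type Gauss map of the first kind. The only real obstacle is hypothesis matching: one must verify that the assumption "$H$ space-like or time-like" in our theorem is exactly the non-light-like hypothesis used in Theorem 3.1 of \cite{ChenVeken2009Tohoku}, so that the light-like case (treated separately via Theorem \ref{TheoremChenVeken2009Houston}) is correctly excluded and no possibilities are missed or double-counted.
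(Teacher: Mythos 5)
Your proposal matches the paper's argument: the paper also obtains this theorem by combining Theorem \ref{NonmaksClassTheo} (pointwise 1-type Gauss map of the first kind $\Leftrightarrow$ parallel mean curvature vector) with Theorem 3.1 of \cite{ChenVeken2009Tohoku}, the classification of space-like surfaces with parallel, non-light-like mean curvature vector. Your additional remarks on the CMC property via Lemma \ref{MinkYuzOrtEgrVektPariseNormKnDuzver1}(a) and on excluding the light-like case are consistent with, and slightly more explicit than, what the paper states.
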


In the next proposition we obtain characterization of non-maximal surfaces with (global) 1-type Gauss map of the first kind.
\begin{Prop} \label{PropMargTrapGlobalGauss}
Let $M$ be an oriented  space-like surface  in the Minkowski space $\mathbb E^4_1$ with light-like mean 
curvature vector. Then $M$ has (global) 1-type Gauss map of the first kind if and only 
if the Gauss map $\nu$ of $M$ is harmonic.
\end{Prop}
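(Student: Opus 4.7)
The backward implication is immediate, since $\Delta\nu=0$ can be written as $\Delta\nu=0\cdot\nu$, exhibiting $\nu$ as of (global) 1-type of the first kind. For the forward direction, assume $\Delta\nu=\lambda\nu$ for some constant $\lambda\in\mathbb R$. My goal is to show $\lambda=0$, whence $\nu$ is harmonic.

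First I would observe that a (global) 1-type Gauss map of the first kind is, in particular, pointwise 1-type of the first kind, so Remark~\ref{MinRemark1} forces $\lambda=\|h\|^2$; hence $\|h\|^2$ is the constant $\lambda$. Since $H$ is light-like in the sense of this paper (hence nonzero), $M$ is non-maximal, and Theorem~\ref{NonmaksClassTheo} then gives $DH=0$. Lemma~\ref{MinkYuzOrtEgrVektPariseNormKnDuz} in turn yields $R^D\equiv 0$, so I can select a parallel orthonormal normal frame $\{e_3,e_4\}$ with $\omega_{34}\equiv 0$ and $\varepsilon_3=-\varepsilon_4=1$. The Ricci equation~\eqref{MinkRicciEquation} gives $[A_3,A_4]=0$, so since the tangent bundle is positive definite I can choose $\{e_1,e_2\}$ diagonalizing both shape operators simultaneously: $A_3=\mathrm{diag}(a,b)$, $A_4=\mathrm{diag}(c,d)$. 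Writing $\mu=\mathrm{tr}A_3=\mathrm{tr}A_4$ (equal because $H$ is light-like and $H=\tfrac{\mu}{2}(e_3-e_4)$), the parallelism of $H$ combined with $\omega_{34}=0$ makes $\mu$ a nonzero constant, so $a+b=c+d=\mu$.

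Next I would unpack the Codazzi equations $h^\beta_{12,1}=h^\beta_{11,2}$ and $h^\beta_{22,1}=h^\beta_{12,2}$ from~\eqref{MinkCodazziversion2}, using $\omega_{34}=0$ and $h^\beta_{12}=0$, to obtain
\begin{equation*}
e_1(a)=-\omega_{12}(e_2)(a-b),\qquad e_2(a)=\omega_{12}(e_1)(a-b),
\end{equation*}
and the analogous identities for $c,d$ (with $b,d$ handled via $a+b=c+d=\mu$). Differentiating the constant $\|h\|^2=a^2+b^2-c^2-d^2=\lambda$ in the $e_1$ and $e_2$ directions and substituting these Codazzi identities collapses the result to
\begin{equation*}
\omega_{12}(e_j)\bigl[(a-b)^2-(c-d)^2\bigr]=0,\qquad j=1,2.
\end{equation*}

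Finally I would split into two cases. If $\omega_{12}\equiv 0$, then $M$ is flat, so $K=0$; if instead $\omega_{12}\not\equiv 0$ at some point, then $(a-b)^2=(c-d)^2$ there, which combined with $a+b=c+d$ gives $ab=cd$, hence $K=ab-cd=0$. In either situation, $K=0$ on (a dense subset of) $M$, and since $\langle H,H\rangle=0$, the identity~\eqref{Gauss-curvature} forces $\lambda=\|h\|^2=-2K=0$. Thus $\Delta\nu=0$, and by Theorem~\ref{MinkNonMaksHarmonikTheo} the Gauss map is harmonic. I expect the only mildly delicate step to be the Codazzi book-keeping that produces the bracket $(a-b)^2-(c-d)^2$ cleanly; everything else is an assembly of earlier results.
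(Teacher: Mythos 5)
Your argument is correct, and its computational core is the same as the paper's: a parallel orthonormal normal frame ($\omega_{34}=0$, $R^D=0$), simultaneous diagonalization of $A_3,A_4$, the Codazzi identities $e_1(h^\beta_{11})=-\omega_{12}(e_2)(h^\beta_{11}-h^\beta_{22})$, $e_2(h^\beta_{11})=\omega_{12}(e_1)(h^\beta_{11}-h^\beta_{22})$, and differentiation of a constant curvature quantity to reach $\omega_{12}(e_i)\bigl[(h^3_{11}-h^3_{22})^2-(h^4_{11}-h^4_{22})^2\bigr]=0$, whence $K=0$ and $\|h\|^2=-2K=0$. Where you genuinely diverge is in the framing: the paper argues by contradiction, first invoking Theorem~\ref{MinkNonMaksHarmonikTheo} to rule out flatness and then the Chen--van der Veken classification (Theorem~\ref{TheoremChenVeken2009Houston}) to place the hypothetical non-harmonic example as a non-flat surface in $\mathbb S^3_1(r^2)$ or $\mathbb H^3(-r^2)$ with the special frame $e_3=rx$, so that non-flatness immediately gives $\omega_{12}\neq0$; you instead extract the parallel frame directly from Theorem~\ref{NonmaksClassTheo} and Lemma~\ref{MinkYuzOrtEgrVektPariseNormKnDuz}, differentiate the constant $\|h\|^2$ rather than the constant $K_0$ (equivalent, since $\langle H,H\rangle$ is constant), and dispose of the flat locus by a case split on $\omega_{12}$ plus continuity of $K$. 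Your route is more self-contained, as it never needs the external classification theorem; the paper's detour buys the explicit geometric placement of the would-be counterexample but is not needed for the computation. Two small remarks: the phrase ``$K=0$ on a dense subset'' deserves the one-line justification that $\{\omega_{12}\neq0\}\cup\mathrm{int}\{\omega_{12}=0\}$ is open and dense and $K$ is continuous; and at the end you do not need Theorem~\ref{MinkNonMaksHarmonikTheo} at all, since $\lambda=\|h\|^2=0$ already gives $\Delta\nu=\lambda\nu=0$.
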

\begin{proof}
Suppose that  $M$ has non-harmonic, (global) 1-type Gauss map of the first kind with 
light-like mean curvature vector. Then \eqref{PW1TypeDefinition} is satisfied for 
$f=f_0$ and $C=0$,  where $f_0\neq0$  is a constant.  Moreover, Theorem \ref{NonmaksClassTheo} 
implies the mean curvature vector $H$ of $M$ is parallel. Since $\nu$ is non-harmonic, it follows from
Theorem \ref{MinkNonMaksHarmonikTheo} and  Theorem \ref{TheoremChenVeken2009Houston} that  
$M$ is congruent to a non-flat  surface lying in either $\mathbb S^3_1(r^2)$ or 
$\mathbb H^3(-r^2)$, and if $M$ is lying in $\mathbb S^3_1(r^2)$ (resp., in $\mathbb H^3(-r^2)$), 
then its mean curvature vector  $H'$ in $\mathbb S^3_1(c^2)$ (resp., in $\mathbb H^3(-r^2)$) 
satisfies $\langle H',H'\rangle=-r^2$ (resp., $\langle H',H'\rangle=r^2$). 
Also,  from Remark \ref{MinRemark1} we have $\|h^2\|=f_0$. 
 
Let $x$ be the position vector of $M$ in $\mathbb E^4_1$ and  
$\langle x, x\rangle=\varepsilon_3 r^{-2}$, where $\varepsilon_3=\pm 1$. 
We choose a local orthonormal frame $\{e_3,e_4\}$ of the normal bundle of $M$ such that 
$e_3=rx$  and $H=\varepsilon_3 r^2 (e_3-e_4)$. Since $e_3=rx$ is parallel, we have $\omega_{34}=0$. 
Moreover, $R^D=0$, i.e., $M$ has flat normal bundle. 
Thus, the shape operators of $M$ are  simultaneously diagonalizable. 
So, there exists a local orthonormal frame $\{e_1,e_2\}$ of tangent bundle of $M$ such that 
$ A_{\beta}=\mathrm{diag}(h^\beta_{11}, h^\beta_{22}), \beta =3, 4,$ 
and 
\begin{eqnarray}\label{PropMargTrapGlobalGaussDenk1} 
h^3_{11}+h^3_{22}=h^4_{11}+h^4_{22}=2r^2.
\end{eqnarray}
On the other hand, since $\|h\|^2$ and $\langle H,H\rangle$ are  constants,   
\eqref{Gauss-curvature} implies the Gaussian curvature $K$ of $M$ is constant, i.e., 
we have $K_0=\varepsilon_3 (h^3_{11}h^3_{22}-h^4_{11}h^4_{22})$,
 where $K_0\neq 0$ is a constant, from which and  \eqref{PropMargTrapGlobalGaussDenk1} we obtain that  
$$
e_i(h^3_{11})h^3_{22}+h^3_{11}e_i(h^3_{22})=e_i(h^4_{11})h^4_{22}+h^4_{11}e_i(h^4_{22})
$$ and 
\begin{equation}\label{an-equation-h}
e_i(h^\beta_{11})=-e_i(h^\beta_{22}),\quad i=1,2,\; \beta=3,4
\end{equation}
Using these equations we obtain that 
\begin{eqnarray}
\label{PropMargTrapGlobalGaussDenk2} e_i(h^3_{11})(h^3_{11}-h^3_{22})=e_i(h^4_{11})(h^4_{11}-h^4_{22}).
\end{eqnarray}
In addition, considering \eqref{an-equation-h}, the Codazzi equation \eqref{MinkCodazzi} yields
\begin{eqnarray}
\label{PropMargTrapGlobalGaussCodazzi1} e_1(h^\beta_{11})
&=&-\omega_{12}(e_2)(h^\beta_{11}-h^\beta_{22}),\\
\label{PropMargTrapGlobalGaussCodazzi2} e_2(h^\beta_{11})
&=&\omega_{12}(e_1)(h^\beta_{11}-h^\beta_{22}), \quad \beta=3,4.
\end{eqnarray}
So, it follows from these equations and   \eqref{PropMargTrapGlobalGaussDenk2} that  
\begin{eqnarray} \label{PropMargTrapGlobalGaussDenk3}
\omega_{12}(e_i)\Big((h^3_{11}-h^3_{22})^2-(h^4_{11}-h^4_{22})^2\Big)=0,\quad i=1,2.
\end{eqnarray}
As  $M$ is not flat, we have $\omega_{12}\not = 0$. Thus,  \eqref{PropMargTrapGlobalGaussDenk3}  
implies $(h^3_{11}-h^3_{22})^2=(h^4_{11}-h^4_{22})^2$ from which and \eqref{Gauss-curvature}  
we get  $f_0=\|h\|^2=0$ which is a contradiction.  Therefore, the Gauss map $\nu$ is harmonic.

The converse is obvious.
\end{proof}

Next we give a characterization for non-maximal space-like surfaces in the Minkowski space $\mathbb E^4_1$ 
with (global) 1-type Gauss map of the first kind.

\begin{theorem}\label{MinkNonmaksimGlob1cesTeorem}
Let $M$ be an oriented non-maximal surface  in the Minkowski space $\mathbb E^4_1$. 
Then $M$ has (global) 1-type Gauss map of the first kind if and only if 
$M$ has parallel mean curvature vector and constant Gaussian curvature.
\end{theorem}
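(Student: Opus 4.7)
The proof proposal is essentially to string together results already established in the paper, using the Gauss curvature identity \eqref{Gauss-curvature} as the bridge between $\|h\|^2$ and $K$.

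For the forward implication, my plan is to start by assuming $M$ has global 1-type Gauss map of the first kind, so that $\Delta \nu = \lambda \nu$ for some constant $\lambda$. In particular this is a pointwise 1-type Gauss map of the first kind, so Theorem \ref{NonmaksClassTheo} gives that the mean curvature vector $H$ is parallel. Next, Remark \ref{MinRemark1} forces $f = \|h\|^2$, so the assumption that $f$ is constant means $\|h\|^2$ is constant. Since $H$ is parallel, part (a) of Lemma \ref{MinkYuzOrtEgrVektPariseNormKnDuzver1} gives that $\langle H, H\rangle$ is constant as well. Then the identity \eqref{Gauss-curvature}, namely $K = 2\langle H,H\rangle - \|h\|^2/2$, exhibits $K$ as a difference of two constants, hence $K$ is constant.

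For the converse, I would assume $H$ is parallel and $K$ is constant. Theorem \ref{NonmaksClassTheo} immediately yields that $M$ has pointwise 1-type Gauss map of the first kind, with $\Delta \nu = \|h\|^2 \nu$ by Remark \ref{MinRemark1}. To upgrade this to global 1-type, it suffices to show $\|h\|^2$ is constant. But Lemma \ref{MinkYuzOrtEgrVektPariseNormKnDuzver1}(a) gives constancy of $\langle H,H\rangle$, so rewriting \eqref{Gauss-curvature} as
\begin{equation*}
\|h\|^2 = 4\langle H,H\rangle - 2K
\end{equation*}
shows $\|h\|^2$ is constant. Hence $\Delta \nu = \lambda \nu$ for the constant $\lambda = \|h\|^2$, i.e., $M$ has global 1-type Gauss map of the first kind.

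There is no real obstacle here; the theorem is essentially a packaging result. The only subtle case is when $H$ is light-like (so $\langle H,H\rangle = 0$ identically), but the argument is unaffected since $0$ is certainly constant. It is worth noting that this case is consistent with Proposition \ref{PropMargTrapGlobalGauss}, which forces the Gauss map to actually be harmonic (i.e., $\lambda = 0$), corresponding via the identity above to $K = -\|h\|^2/2 = 0$, so $K$ is constant and equal to zero, in accordance with Theorem \ref{MinkNonMaksHarmonikTheo}.
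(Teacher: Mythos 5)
Your proposal is correct and follows essentially the same route as the paper: Theorem \ref{NonmaksClassTheo} supplies parallel $H$ (resp.\ the identity $\Delta\nu=\|h\|^2\nu$), Lemma \ref{MinkYuzOrtEgrVektPariseNormKnDuzver1}(a) gives constancy of $\langle H,H\rangle$, and \eqref{Gauss-curvature} transfers constancy between $\|h\|^2$ and $K$ in both directions. The only difference is that you spell out the use of Lemma \ref{MinkYuzOrtEgrVektPariseNormKnDuzver1}(a) in the converse, which the paper leaves implicit.
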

\begin{proof}  
Let $M$ be an oriented non-maximal surface  in Minkowski space $\mathbb E^4_1$.
First we assume that $M$ has (global) 1-type Gauss map of the first kind. 
Then it follows from  \eqref{PW1TypeDefinition} and \eqref{MinkGaussLaplEnSon} that 
$\|h\|^2=f_0$ for some constant $f_0$.  
Also, Theorem \ref{NonmaksClassTheo} implies that  $M$ has parallel 
mean curvature vector which implies $\langle H,H\rangle$ is constant.
Therefore, \eqref{Gauss-curvature} implies that the Gaussian curvature $K$ of $M$ is constant.

Conversely, let $M$ has parallel mean curvature vector and constant Gaussian curvature.
By Theorem \ref{NonmaksClassTheo} we have $\Delta \nu = \|h\|^2 \nu$. 
Also, equation \eqref{Gauss-curvature} implies that $\|h\|^2$ is constant. 
Therefore the Gauss map of $M$ is of 1-type of the first kind.
\end{proof}

Next we give an example of a surface with  non-harmonic (global) 1-type Gauss map 
of the first kind:
\begin{Example}
Let $M$ be a surface in $\mathbb E^4_1$ given by 
$$x(u,v)=(a \cosh u,a \sinh u,b \cos v,b\sin v ), \quad b^2-a^2\neq 0,\quad ab\neq 0.$$ 
Let $c= \sqrt{|b^2-a^2|}.$ 
Then we have $M=H^1(-a^{-1})\times S^1(b^{-1})\subset S^3_1(c^{-2})\subset \mathbb E^4_1$ 
if $b^2-a^2>0$, and  $M=H^1(-a^{-1})\times S^1(b^{-1})\subset H^3(-c^{-2})\subset \mathbb E^4_1\;$ 
if $b^2-a^2<0$. 
By a direct calculation, it can be  seen that  $M$ has parallel mean curvature vector 
and constant Gaussian curvature. 
Hence, Theorem \ref{MinkNonmaksimGlob1cesTeorem} implies $M$ has (global) 1-type Gauss map of the first kind.
\end{Example}


\section*{Acknowledgements}
This work which is a part of the second author's doctoral thesis is partially supported by Istanbul Technical University.



\end{document}